\documentclass{amsart}
\usepackage{comment}
\usepackage{hyperref}
\usepackage{amssymb}
\usepackage{mathrsfs}%
\usepackage[title]{appendix}%
\usepackage{xcolor}%
\usepackage{textcomp}%
\usepackage{manyfoot}%
\usepackage{booktabs}%
\usepackage{algorithm}%
\usepackage{algorithmicx}%
\usepackage{algpseudocode}%
\usepackage{listings}%

\usepackage{tikz-cd}
\usepackage{tikz-3dplot}
\usepackage{pgfplots}
\pgfplotsset{compat=1.17}
\usepackage{pgfplotstable}
\usepgfplotslibrary{patchplots}
\usetikzlibrary{3d,calc}
\usetikzlibrary{calc, decorations.pathreplacing}
\usepackage{float}
\usepackage[T1]{fontenc}
\usepackage[utf8]{inputenc}
\usepackage[english]{babel}

\newcommand{\numberset}{\mathbb}
\newcommand{\N}{\numberset{N}}
\newcommand{\R}{\numberset{R}}
\newcommand\ddfrac[2]{\frac{\displaystyle #1}{\displaystyle #2}}

\theoremstyle{plain} 
\newtheorem{thm}{Theorem}[section]
 
\newtheorem{lem}[thm]{Lemma} 
\newtheorem{prop}[thm]{Proposition} 
\newtheorem*{theorem*}{Theorem}
\newtheorem{op}{Problem}

\theoremstyle{definition}

\theoremstyle{remark}

\title[A self-contained proof of the ACF formula]{A self-contained proof of the Alt-Caffarelli-Friedman monotonicity formula}

\author{Emanuele Salato}
\address{Dipartimento di Scienze Matematiche, Politecnico di Torino, Corso Duca degli Abruzzi 24, 10129 Torino, Italy
}
\address{Dipartimento di Matematica ``G. Peano'', Università di Torino, Via Carlo Alberto 10, 10123 Torino, Italy.}
\address{Laboratoire de Mathématiques, Université Savoie Mont Blanc, UMR CNRS 5127, Campus Scientifique, 73376 Le Bourget-du-Lac, France}
\email{emanuele.salato@unito.it}

\begin{document}
\begin{abstract}
    The Alt-Caffarelli-Friedman monotonicity formula is a cornerstone in the theory of free boundary problems. In this note we provide a self-contained proof of this result. To prove the main stepping stone, namely the \emph{Friedland-Hayman inequality}, we exploit a useful convexity property.
\end{abstract}

\maketitle
\tableofcontents

{\small

\noindent {\textbf{Keywords:} monotonicity formula, free boundary problem, blow-up analysis, homogeneous function, variational inequality}

\smallskip
\noindent{\textbf{MSC 2020:} 26A48;  35R35}
}

\section{Introduction}
The \emph{Alt-Caffarelli-Friedman monotonicity formula} (ACF formula) was first introduced in \cite[Lemma 5.1]{acf} as a tool designed to deal with the regularity of solutions of a particular two-phase free boundary problem.
A crucial issue in facing these kind of problems is to establish the optimal regularity of solutions across the free boundary. In this setting, the ACF formula is a valuable device, together with regularity techniques, to provide estimates for the behavior of the gradient of a solution of the problem in a point of the free boundary, taking into account of the contribution of the two phases.

Given the dimension $n$, we can depict the typical situation in the ball with radius two $B_2 \subset \R^n$, with center in the origin $\textbf{0}$ (in the following the subscript of $B$ will refer to its radius). The non negative functions $u_+,u_- \in C(B_2)$ appearing in the ACF formula need to satisfy
\begin{equation}\label{ACFcond}
    \begin{cases}
    \Delta u_+ \ge 0 & \,  \textrm{in} \,\, \{u_+>0\}, \\
    \Delta u_- \ge 0 & \,  \textrm{in} \,\, \{u_->0\}, \\
    
    u_+(x) \cdot u_-(x)=0 & \,  \text{for every } x \in B_2,\\
    u_+(\textbf{0})=u_-(\textbf{0})=0,
    \end{cases}
\end{equation}
where the first two inequalities hold in the sense of distributions (see Figure \ref{f.ACFconfig}).

\begin{figure}[htp]
\centering
\begin{tikzpicture}
  \draw[thick] (0,0) circle (3cm);
  \draw[dashed] (0,0) circle (1.5cm);

  \coordinate (A) at (-3,0);
  \coordinate (B) at (3,0);
  \coordinate (C1) at (-1,2);
  \coordinate (C2) at (1,-2);
  \draw[thick] (A) .. controls (C1) and (C2) .. (B);

  \node at (-1.5,1.6) {$u_+ > 0$};        
  \node at (-1.5,-1.6) {$u_- > 0$};       
  \node at (2.5,-2.5) {$B_2$};
  \node at (1.9,0) {$B_1$};
  \node at (1.4,1.7) {$\Delta u_+  \ge  0$};
  \node at (1.4,-1.7) {$\Delta u_-  \ge  0$};

  \fill (0,0) circle (2pt); 
  \node[above right] at (0,0) {\textbf{0}};
\end{tikzpicture}
\caption{A pair $u_+,u_-$ satisfying \eqref{ACFcond} in $\R^2$.}
\label{f.ACFconfig}
\end{figure}
The theory developed in \cite{acf} has been designed to understand the model studied in \cite{acf1}, which involves the irrotational flow of two ideal (incompressible) fluids. In terms of their (suitably normalized) stream functions, these properties translate into a null divergence condition.

Despite being a frequently used and very well-known object in the field of free boundary problems, in the wide literature concerning this subject, we were not able to find a self-contained comprehensive proof.
This probably happened because the central fact necessary to obtain monotonicity, i.e., the \emph{Friedland-Hayman inequality}, obtained as a corollary of \cite[Theorem 3]{fh}, had not at that time been demonstrated in a totally analytical way. This result is a sharp inequality concerning the growth rates of two homogeneous harmonic functions with Dirichlet boundary conditions on disjoint cones of the Euclidean space. It plays a central role in the proof of the ACF formula, since it is possible to associate $u_+$ and $u_-$ with two such functions and to obtain through the inequality a lower bound on the growth rate of the ACF formula.
The original proof of this powerful tool relies on a result achieved using numerical techniques, see \cite{ho}, which involves \emph{Hermite's functions}. A way to analytically complete this proof of the Friedland-Hayman inequality is provided in \cite[Section 8]{em}, which, however, contains rather involved calculations. 

A more detailed version of the proof of the ACF formula was given, following a different approach that exploits the one-dimensional \emph{Gaussian measure}, in \cite[Chapter 12]{cs}. Indeed, an easier proof of the two-dimensional case is provided, while in dimension greater than or equal to three a more accurate analysis and refined tools are needed. However, a part of this proof relies on the unpublished paper \cite{bkp} (the contents of this work are sketched in \cite[Section 2.4]{ck}).

The purpose of this paper is to give a self-contained and comprehensive proof of the ACF formula in the case $n \ge 3$, with a different approach to the Friedland-Hayman inequality.
The proof we present is \emph{not original}, its structure is the same as \cite{n}. We exploit the content of \cite{k} to obviate a flaw present in the proof of a convexity property. In particular instead of \cite[Teorema 4.5]{n} we use Proposition \ref{propconvex}, which corrects this result to the extent necessary for our purposes. We have deliberately decided to avoid technicalities related to some regularity issues in order to make the presentation more immediate, while providing adequate references when necessary.

\begin{thm}[ACF formula]\label{t.ACF}
Let $u_+,u_-$ be as in \eqref{ACFcond}. The function $J: (0,1) \rightarrow \R$, defined by
\begin{equation} \label{acffunctional}
    J(s)=\frac{1}{s^4} \int_{B_s}\frac{|\nabla u_+(x) |^2}{|x|^{n-2}} \, dx \int_{B_s}\frac{|\nabla u_-(x) |^2}{|x|^{n-2}} \, dx
\end{equation}
for every $s \in (0,1)$, is finite and increasing. 
\end{thm}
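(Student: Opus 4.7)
The plan is to take the logarithmic derivative of $J$ and show it is nonnegative, reducing the problem via a dilation to an inequality on $S^{n-1}$ which is then tackled with the Friedland--Hayman inequality. Setting $I^\pm(r) := \int_{B_r} |x|^{2-n}|\nabla u^\pm|^2\,dx$, one has
\[
\frac{J'(r)}{J(r)} = -\frac{4}{r} + \frac{(I^+)'(r)}{I^+(r)} + \frac{(I^-)'(r)}{I^-(r)}.
\]
The substitution $\tilde u^\pm(x) := u^\pm(rx)$ preserves \eqref{ACFcond} and gives $\tilde I^\pm(1) = I^\pm(r)$ and $(\tilde I^\pm)'(1) = r (I^\pm)'(r)$, so it is enough to prove, for any admissible pair,
\[
\frac{(I^+)'(1)}{I^+(1)} + \frac{(I^-)'(1)}{I^-(1)} \ge 4.
\]

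To estimate the $u^+$-ratio I would exploit that $|x|^{2-n}$ is harmonic on $\R^n\setminus\{\mathbf{0}\}$ for $n\ge 3$. Integrating by parts on $B_1\setminus B_\varepsilon$ with the vector field $|x|^{2-n} u^+\nabla u^+ - \tfrac{1}{2}(u^+)^2\nabla|x|^{2-n}$ and sending $\varepsilon\to 0$ (the inner boundary term vanishes since $u^+(\mathbf{0})=0$) yields
\[
I^+(1) + \int_{B_1}|x|^{2-n}u^+\Delta u^+\,dx = \int_{S^{n-1}} u^+\partial_\nu u^+\,d\sigma + \frac{n-2}{2}\int_{S^{n-1}}(u^+)^2\,d\sigma,
\]
and since $u^+\Delta u^+\ge 0$ the left-hand integral is nonnegative and may be dropped to obtain an upper bound for $I^+(1)$ in terms of the spherical traces alone. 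Combined with $(I^+)'(1) = \int_{S^{n-1}}[(\partial_\nu u^+)^2 + |\nabla_S u^+|^2]\,d\sigma$, the quotient $(I^+)'(1)/I^+(1)$ is bounded below by a Rayleigh-type ratio in $v := u^+|_{S^{n-1}}$ and $w := \partial_\nu u^+|_{S^{n-1}}$. Using $\int|\nabla_S v|^2 \ge \lambda^+\int v^2$, with $\lambda^+$ the first Dirichlet eigenvalue of $-\Delta_{S^{n-1}}$ on $\mathrm{supp}(v)$, and then optimising the remaining one-variable quotient (handling the cross term $\int vw$ by Cauchy--Schwarz) one obtains
\[
\frac{(I^+)'(1)}{I^+(1)} \ge 2\alpha^+,
\]
where $\alpha^+>0$ is the positive root of $\alpha(\alpha + n - 2) = \lambda^+$. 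The same estimate applies to $u^-$.

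Thus the target inequality reduces to $\alpha^+ + \alpha^- \ge 2$, which is precisely the Friedland--Hayman inequality for the disjoint spherical supports of $u^\pm|_{S^{n-1}}$. This is the hard part and the main novelty the paper promises to supply via a convexity argument; everything else in this plan is Green-identity bookkeeping followed by a short optimisation. Strict monotonicity would then follow by tracking equality in each step: equality in Friedland--Hayman forces the supports to be complementary hemispheres, Cauchy--Schwarz forces $w$ proportional to $v$, and the vanishing of $\int|x|^{2-n}u^\pm\Delta u^\pm$ forces harmonicity, collectively pinning $u^\pm$ down to the linear extremals $(x\cdot e)^\pm$ --- the only configuration on which $J$ becomes constant.
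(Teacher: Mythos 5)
Your plan reproduces Steps 1 and 2 of the paper's proof essentially verbatim: the logarithmic derivative and rescaling reduce the claim to $I'(1)/I(1)\ge 2\alpha$ for each phase, the Green-identity bookkeeping is the same (the paper writes it via $\Delta(u^2)\ge 2|\nabla u|^2$ rather than your vector field, but they are the same computation), and the Cauchy--Schwarz/optimisation step producing the positive root $\alpha$ of $\alpha(\alpha+n-2)=\lambda$ is exactly the paper's \eqref{tlambdaCharConstant}--\eqref{q>alpha}. So up to the point where you arrive at $\alpha^++\alpha^-\ge 2$, you and the paper are on the same road.

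The genuine gap is that you then stop. You write that the Friedland--Hayman inequality ``is the hard part and the main novelty the paper promises to supply via a convexity argument,'' and you do not attempt it. But that inequality \emph{is} the theorem, for all practical purposes: the reduction in Steps 1--2 is classical (it appears already in \cite{acf} and \cite{cs}), and the paper's actual contribution is a short proof of $\alpha(\Gamma_+)+\alpha(\Gamma_-)\ge 2$ for disjoint $\Gamma_\pm\subset S^{n-1}$. The paper does this in Steps 3--4, using two ingredients you never invoke: (i) spherical symmetrisation (Proposition \ref{p.symmetrized}) to reduce to complementary caps $\Gamma(\theta_0),\Gamma(\pi-\theta_0)$; (ii) strict convexity of $\theta_0\mapsto\lambda(\theta_0)$ (Proposition \ref{propconvex}, via the one-dimensional reduction in Lemma \ref{l.infspherefunct}), combined with a dimension-monotonicity $\alpha(\theta_0,n)\ge\alpha(\theta_0,n+1)$ derived by embedding $\R^n\hookrightarrow\R^{n+1}$ and feeding the harmonic extension back into the bound from Step 2. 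The convexity and the dimension-monotonicity together force the minimum of $\alpha(\theta_0,n)+\alpha(\pi-\theta_0,n)$ to stay $\ge 2$ as $n\to\infty$, which is where the contradiction closes. Without some version of this (or an alternative analytic proof of Friedland--Hayman), your argument reduces one unproved statement to another. Your closing sketch of strictness is also a bit optimistic---the paper's own argument only directly yields $J'\ge 0$ a.e.\ and the strictness requires exactly the kind of case analysis you gesture at, but this is a secondary issue next to the missing core inequality.
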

This result can be proved requiring weaker assumptions on the configuration described by \eqref{ACFcond}, see for example \cite[Theorem 1.3]{v}, where the functions are only requested to belong to the Sobolev space $H^1(B_1)$, instead of being continuous.

We recall that in \cite[Theorem 2.9]{psu} the rigidity cases, namely the ones were the function appearing in the formula is constant, are analyzed (this problem was originally considered in \cite[Section 6]{acf}).
\begin{thm}[Rigidity of the ACF formula]\label{t.rigidity}
    Let $u_+,u_-$ be as in \eqref{ACFcond} and $J$ be as in \eqref{acffunctional}. Assume that $J(R_1)=J(R_2)$ for some $0<R_1<R_2<1$, then either of the following holds:
    \begin{itemize}
        \item if $J(R_2)=0$, then $u_+\equiv0$ in $B_{R_2}$ or $u_-\equiv0$ in $B_{R_2}$;
        \item if $J(R_2)>0$, then there exist a unit vector $\nu \in \partial B_1$ and two constants $c_+,c_->0$ such that
        \begin{equation*}
            u_+(x)=c_+(x \cdot \nu)^+ \quad , \quad
            u_-(x)=c_-(x \cdot \nu)^- \quad \textup{for } x \in B_{R_2},
        \end{equation*}
        where $(x \cdot \nu)^+$ and $(x \cdot \nu)^-$ are the positive part and the negative part of the function $x \mapsto x \cdot \nu$, defined for every $x \in B_1$.
    \end{itemize}
\end{thm}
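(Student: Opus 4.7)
My plan is to build on the proof of Theorem~\ref{t.ACF} and track when the estimates used there are saturated. Since $J$ is non-decreasing and $J(R_1)=J(R_2)$, it must be constant on $[R_1,R_2]$, so $J'(r)=0$ for a.e.\ $r\in(R_1,R_2)$. The rigidity theorem will then be the translation of the identity $J'\equiv 0$ into structural information about $u^+$ and $u^-$.

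The degenerate case $J(R_2)=0$ should follow at once. Writing $I^\pm(r):=\int_{B_r}|\nabla u^\pm(x)|^2/|x|^{n-2}\,dx$, the functions $I^\pm$ are non-decreasing, and $I^+(r)I^-(r)=0$ on a set of positive measure gives $I^+(R_2)=0$ or $I^-(R_2)=0$. The vanishing of the weighted gradient integral on $B_{R_2}$ forces $|\nabla u^\pm|\equiv 0$ a.e.\ in $B_{R_2}$, and combined with $u^\pm(\mathbf{0})=0$, the continuity, and the non-negativity of $u^\pm$, one concludes $u^\pm\equiv 0$ there.

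The heart of the argument is the case $J(R_2)>0$. The classical proof of monotonicity for ACF rests, for each $r$, on two ingredients: (i) an ODE-type estimate $r(I^\pm)'(r)/I^\pm(r)\geq 2\gamma^\pm(r)$, where $\gamma^\pm(r)$ is the characteristic exponent associated, via $\lambda=\gamma(\gamma+n-2)$, to the first Dirichlet eigenvalue of $-\Delta_{S^{n-1}}$ on the spherical slice $\omega^\pm(r):=\{u^\pm>0\}\cap\partial B_r$ (rescaled to $\partial B_1$); (ii) the Friedland-Hayman inequality $\gamma^+(r)+\gamma^-(r)\geq 2$. Logarithmic differentiation of $J$ gives $rJ'/J=-4+r(I^+)'/I^++r(I^-)'/I^-\geq 2(\gamma^++\gamma^--2)\geq 0$. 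Imposing $J'\equiv 0$ forces equality in both (i) and (ii) for a.e.\ $r\in(R_1,R_2)$: equality in (i) characterizes $u^\pm$ as $\gamma^\pm$-homogeneous of separated-variable form $u^\pm(\rho\theta)=\rho^{\gamma^\pm}\phi^\pm(\theta)$, where $\phi^\pm$ is a first Dirichlet eigenfunction on $\omega^\pm$; equality in (ii) yields $\gamma^+=\gamma^-=1$ and, by rigidity of Friedland-Hayman, forces $\omega^+$ and $\omega^-$ to be two antipodal open hemispheres of $S^{n-1}$, independently of $r$ thanks to the homogeneity.

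Putting everything together, there is a unit vector $\nu\in\partial B_1$ such that $\{u^\pm>0\}\cap B_{R_2}=\{\pm x\cdot\nu>0\}\cap B_{R_2}$, and on each half-ball $u^\pm$ is $1$-homogeneous with spherical trace proportional to $(\pm\theta\cdot\nu)$, the unique (up to positive multiples) first Dirichlet eigenfunction on a hemisphere. Extending by homogeneity delivers $u^\pm(x)=c^\pm(x\cdot\nu)^\pm$ with $c^\pm>0$, as claimed. I expect the main obstacle to be the rigidity part of the Friedland-Hayman inequality: the convexity argument that gives $\gamma^++\gamma^-\geq 2$ in the body of the paper certifies only the inequality, and characterizing equality likely requires a separate input, plausibly the rigidity of the spherical symmetrization used to reduce to geodesic caps together with the observation that $\gamma=1$ on a spherical cap forces the cap to be exactly a hemisphere.
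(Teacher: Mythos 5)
The paper does not prove Theorem~\ref{t.rigidity}: it is stated only as background with a pointer to \cite[Theorem 2.9]{psu}, so there is no internal proof to compare yours against. (Incidentally, the hypothesis $J(R_1)=J(R_2)$ is in tension with the word ``strictly increasing'' in Theorem~\ref{t.ACF}; the argument of Section~\ref{s.4} actually establishes only $J'\ge 0$, which is why the rigidity statement is not vacuous.)

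Assessed on its own merits, your strategy --- track equality through every inequality used in the monotonicity proof --- is the standard and correct one, and the case $J(R_2)=0$ is essentially complete. For $J(R_2)>0$ there are two concrete gaps beyond the Friedland--Hayman rigidity you already flag. First, within this paper's toolbox the only handle on the equality case of Friedland--Hayman is Corollary~\ref{c.cvx}, which gives strict convexity of $\theta_0\mapsto\lambda(\theta_0)$ only for $n\ge 5$, so dimensions $3$ and $4$ would need a separate argument; and you would also need an equality version of Proposition~\ref{p.symmetrized} (when does $\lambda(\Gamma)=\lambda(\Gamma^\#)$ force $\Gamma$ to be a cap, up to a null set?) --- the symmetrization proof as written gives only an inequality. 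Second, and more structurally, your argument yields $J'\equiv 0$ only on $(R_1,R_2)$, because monotonicity plus $J(R_1)=J(R_2)$ says nothing for $r<R_1$. You therefore obtain the homogeneous open-book form of $u^\pm$ only on the annulus $R_1<|x|<R_2$. The closing sentence ``Extending by homogeneity delivers $u^\pm(x)=c^\pm(x\cdot\nu)^\pm$'' is precisely the step that requires an argument: one must still propagate the structure inward to $B_{R_1}$, e.g.\ by proving that $J$ is in fact constant on all of $(0,R_2]$, or by a comparison/unique-continuation argument using the known trace of $u^\pm$ on $\partial B_{R_1}$ together with subharmonicity and the constraint $u^+u^-=0$. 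As written, nothing in the proposal rules out $u^\pm$ deviating from the linear profile inside $B_{R_1}$.
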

We notice that a stability result related to the ACF formula was given in \cite[Theorem 1.3]{akn}.
\begin{thm}[Stability of the ACF formula]\label{t.stability}
Let $\rho \in [0,1/2]$. There exists a constant $C=C(n)>0$ such that the following holds. Suppose that $u_+,u_-$ satisfy \eqref{ACFcond} and let $J$ be as in \eqref{acffunctional}. Then there exist two constants $c_+,c_->0$ and a unit vector $\nu \in \partial B_1$ such that
\begin{equation*}
    \int_{B_1 \setminus B_\rho} \!\!\!\!\!\!\!\! \big[(u_+(x) - c_+(x \cdot \nu)^+)^2 +  (u_-(x) - c_-(x \cdot \nu)^-)^2 \big]\, dx
    \le C \log \! \left( \frac{J(1)}{J(\rho)} \right) \!\! \left( ||u_+||^2_{2} + ||u_-||^2_{2} \right),
\end{equation*}
where we denote with $||\cdot||_2$ the $L^2$-norm of a function in $B_1$. Furthermore, there exists a dimensional constant $\epsilon_0= \epsilon_0(n)\!> \!0$ such that if the quotient $\log(J(1)/J(0^+))\!<\!\epsilon_0$, then $c_+,c_-\!$ and $\nu$ may be chosen independently from $\rho$.
\end{thm}
We are aware of the fact that in the literature many other generalizations of the ACF formula are available. We have chosen to present only these two results for the sake of shortness, because they are prime examples of the so called \emph{rigidity} and \emph{stability} statements.

The paper is organized as follows. In Section \ref{s.2.1} we recall some useful fact regarding changes of coordinates and rearrangements. In Section \ref{s.2.2} we study the first Dirichlet eigenvalue on open subsets of the sphere. In Section \ref{s.3} we prove a convexity property. In Section \ref{s.4} we give the proof of Theorem \ref{t.ACF}. In Section \ref{s.5} we present an open problem.

\section{Parametrizations and rearrangements}\label{s.2.1}
Consider the manifold $(\R^n,g_{\R^n})$, where $g_{\R^n}$ denotes the (standard) \emph{flat} Euclidean metric. We have that the $(n-1)$-dimensional sphere of unit radius $\partial B_1$ can be endowed with the Riemannian metric inherited from $(\R^n,g_{\R^n})$, that we call \emph{round} and denote $g_{\partial B_1}$. So it is possible to define the notions of \emph{Riemannian gradient} $\nabla_\phi$ and \emph{Laplace-Beltrami operator} $\Delta_\phi$ on $(\partial B_1,g_{\partial B_1})$, see \cite[Definitions 1 and 3, p. 2-3]{c}.

The \emph{polar parametrization} of $\R^n$, with respect to the origin $\textbf{0}$, is given by the function
\begin{equation}\label{polarparam}
    \begin{split}
        \mathscr{P}:\R^+ \times \partial B_1 &\rightarrow \R^n \\
        (r,\phi) &\mapsto x=r\phi,
    \end{split}
\end{equation}
we call the parameters in $\R^+ \times \partial B_1$ \emph{polar coordinates} and the first one \emph{radial coordinate}. 
It is know that $(\R^n\setminus \{\textbf{0} \},g_{\R^n})$ is isometric to $(\R^+ \times \partial B_1, g_{\R^+} + r^2g_{\partial B_1})$, see \cite[Section 1.4.4]{pet}.
Let $F$ be a function with domain $\R^n$, we define $F_\mathscr{P}:=F \circ \mathscr{P}$.
Let $v \in L^1(\R^n)$ by the changes of variables formula we have 
\begin{equation}\label{eq:polarchange}
    \int_{\mathbb{R}^n}  v(x) \, dx = \!\int_0^{+\infty}  \int_{\partial B_1} v_\mathscr{P}(r,\phi) \, r^{n-1} \, d\sigma_\phi \, dr,
\end{equation}
where $d\sigma_\phi$ is $(n-1)$-dimensional Hausdorff measure on  $\partial B_1$, see \cite[Theorem 2.49]{fol}. From now on we will write $d\sigma$ instead of $d\sigma_\phi$. Let $\textbf{r}$ be the vector of the orthonormal frame of $\R^+ \times \partial B_1$ corresponding to the radial coordinate. Let us also assume that
$v$ is a twice differentiable function. We can express its gradient in polar coordinates via an orthogonal decomposition (that relies on the orthonormal frame) as
\begin{equation}\label{Gradient in polar coordinates}
    (\nabla v)_\mathscr{P}=(v_\mathscr{P})_r \, \textbf{r} + \frac{1}{r}\nabla_\phi (v_\mathscr{P}),
\end{equation}
where the subscript $r$ denotes the differentiation with respect to the radial coordinate, see \cite[equation (1.4.6)]{fy}. Similarly its Laplacian can be written as 
\begin{equation}\label{Laplacian in polar coordinates}
    (\Delta v)_\mathscr{P}=(v_\mathscr{P})_{rr}+\frac{n-1}{r}(v_\mathscr{P})_r+\frac{1}{r^2}\Delta_\phi (v_\mathscr{P}),
\end{equation}
see \cite[Lemma 1.4.1]{fy}. 

The \emph{hypershperical parametrization} of $\partial B_1$, with respect to the \emph{north pole} $p:=(1, 0,\ldots,0)$, is given by the function 
\begin{equation}\label{sphereparam}
    \begin{split}
           \mathscr{S}: (0,\pi) \times \mathbb{S}^{n-2}&\rightarrow \partial B_1 \\
        (\theta,\xi) &\mapsto \phi=(\cos(\theta) ,\sin(\theta) \xi),
    \end{split}
\end{equation}
where we define $\mathbb{S}^{n-2}:=\partial B_1 \cap \{x_1=0\}$. 
\begin{figure}[H]
    \centering
    \hspace{0.5cm}
\begin{tikzpicture}[scale=1]

\def\Ra{2}     

\begin{scope}[shift={(-100*\Ra,0)}]

  \draw[thick, black] (0,0) circle(\Ra);

  \draw[thick, blue] (0,0) ellipse({\Ra} and {0.3});

  \fill (0,0) circle (1pt);
  \node[right] at (0.1,0) {$\mathbf{0}$};

  \fill (0,\Ra) circle (1pt);
  \node[above right] at (0,\Ra) {$p$};

  \node[right] at (\Ra + 0.2, 0) {$\mathbb{S}^{1}$};

  \node at (2,-\Ra) {$\partial B_1$};

\end{scope}

\end{tikzpicture}

    \caption{The circumference of unit radius $\mathbb{S}^{1}$, in $\R^3$ (in blue).}
    \label{}
\end{figure}
We call the parameters in $(0,\pi) \times \mathbb{S}^{n-2}$ \emph{hypershperical coordinates} and the first one \emph{colatitude coordinate}. Notice that it is possible to obtain an explicit expression for the colatitude coordinate, that from now on we will call colatitude, namely
$$\theta=\arccos(p \cdot \phi).$$
This quantity, geometrically, represents the amplitude (in radiant) of the angle between the two radii of $\partial B_1$ connecting the origin $\textbf{0}$ with the points $p$ and $\phi$, respectively.
\begin{figure}[H]
    \centering
\begin{tikzpicture}[scale=1]
\def\Ra{2}         
\def\tcol{30}     
\pgfmathsetmacro{\x}{0.7*\Ra*sin(\tcol)}  
\pgfmathsetmacro{\y}{0.7*\Ra*cos(\tcol)}

\draw[thick, black] (0,0) circle(\Ra);

\draw[dashed, gray] (0,0) ellipse({\Ra} and {0.3});

\draw[dashed] (0,0) -- (0,\Ra);

\fill (0,0) circle (1pt);
\node[right] at (0.1,0) {$\mathbf{0}$};

\fill (0,\Ra) circle (1pt);
\node[above right] at (0,\Ra) {$p$};

\fill (\x,\y) circle (1pt);
\node[above right] at (\x,\y) {$\phi$};

\draw[dashed] (0,0) -- (\x,\y);

\draw[->] (0,0) ++(90:0.4) arc[start angle=90, end angle={90 - \tcol}, radius=0.4];
\node at ({0.6*cos(90 - \tcol/2) +0.05}, {0.6*sin(90 - \tcol/2)+0.2}) {$\theta_0$};

\node at (2,-\Ra) {$\partial B_1$};

\end{tikzpicture}

    \caption{A point $\phi$ of colatitude $\theta_0 \approx \pi/6$, in $\partial B_1 \subset \R^3$.}
    \label{}
\end{figure}  
Let $\theta_0 \in (0,\pi)$, we define the open set
\begin{equation*}
        \Gamma(\theta_0):=\{\phi \in \partial B_1 \, :  \, 0 \le \arccos(p \cdot \phi) < \theta_0 \}.
    \end{equation*}
A \emph{spherical cap} of colatitude $\theta_0$ (with center $\psi(p)$, in $\partial B_1 \subset \R^n$) is a set of the type $\psi(\Gamma(\theta_0))$, where $\psi: \partial B_1 \rightarrow \partial B_1 $ is an isometry of $\partial B_1$. Recall that the isometry group of $\partial B_1$ is given by \emph{the orthogonal group} $O(n)$, that is made up of composition of the so-called \emph{rotations} and \emph{reflections}, see \cite[Problem 5-8, p. 88]{lee}.
    
\begin{figure}[htp]
    \centering
\begin{tikzpicture}[scale=1]

\def\Ra{2}     
\def\h{0.6}   
\pgfmathsetmacro{\a}{sqrt(\Ra*\Ra - (\Ra - \h)*(\Ra - \h))}  
\pgfmathsetmacro{\angle}{acos((\Ra - \h)/\Ra)}             

\draw[thick] (0,0) circle(\Ra);

\draw[thick] (0,\Ra - \h) ellipse({\a} and {0.1});

\draw[dashed] (0,0) -- (0,\Ra - \h);

\draw[dashed] (0,0) -- ({\a},{\Ra - \h});

\draw[->] (0,0) ++(90:0.4) arc[start angle=90, end angle={90 - \angle}, radius=0.4];
\node at ({0.6*cos(90 - \angle/2)}, {0.6*sin(90 - \angle/2)}) {$\theta_0$};

\node[right] at ({\a + 0.1},{\Ra - \h}) {$\Gamma(\theta_0)$};

\node at (2,-\Ra ) {$\partial B_1$};

\fill (0,0) circle (1pt);
\node[below left] at (0,0) {$\mathbf{0}$};

\fill (0,\Ra) circle (1pt);
\node[above right] at (0,\Ra) {$p$};
\end{tikzpicture}
    \caption{The set $\Gamma(\theta_0)$ with $\theta_0 \approx \pi/4$ (in $\partial B_1 \subset \R^3$).}
    \label{}
\end{figure}
It is know that $(\partial B_1,g_{\partial B_1})$ is isometric to $((0,\pi) \times \mathbb{S}^{n-2}, g_{(0,\pi)} + (\sin\theta)^2g_{\mathbb{S}^{n-2}})$, where $g_{\mathbb{S}^{n-2}}$ is the round metric on $\mathbb{S}^{n-2}$, see \cite[Example 1.4.6]{pet}. It is possible to define the notions of Riemannian gradient $\nabla_\xi$ and Laplace-Beltrami operator $\Delta_\xi$ on $(\mathbb{S}^{n-2},g_{\mathbb{S}^{n-2}})$. Let $f$ be a function with domain $\partial B_1$, we define $f_\mathscr{S}=f \circ \mathscr{S}$.
Let $u \in L^1(\partial B_1)$  by the changes of variables formula we have that it holds
\begin{equation}\label{eq:sphericalchange}
    \int_{\partial B_1}  u(\phi) \, d\sigma  =  \int_0^\pi \int_{\mathbb{S}^{n-2}}  u_\mathscr{S}(\theta,\xi) \sin^{n-2}(\theta) \, d\sigma_\xi \, d\theta,
\end{equation}
where $d\sigma_\xi$ is $(n-2)$-dimensional Hausdorff measure on  $\mathbb{S}^{n-2}$, see \cite[equation (1.5.4)]{fy}. Let $\boldsymbol{\theta}$ be the vector of the orthonormal frame of $(0,\pi) \times \mathbb{S}^{n-2}$ corresponding to the colatitude coordinate. Let us also assume that
$u$ is a twice differentiable function. We can express its gradient in hypershperical coordinates via an orthogonal decomposition (that relies on the orthonormal frame) as
\begin{equation}\label{Gradient in spherical coordinates}
    (\nabla u)_\mathscr{S}=(u_\mathscr{S})_\theta \, \boldsymbol{\theta} + \frac{1}{\sin (\theta)}\nabla_\xi (u_\mathscr{S}),
\end{equation}
where the subscript $\theta$ denotes the differentiation with respect to the colatitude. Similarly its Laplace-Beltrami operator can be written as
\begin{equation}\label{Laplacespheretheta}
    (\Delta_\phi u)_\mathscr{S}=\frac{1}{(\sin \theta)^{n-2}}  \bigg( (\sin \theta)^{n-2} (u_\mathscr{S})_{\theta} \bigg)_{\theta} + \frac{1}{(\sin \theta)^2} \Delta_\xi (u_\mathscr{S}),
\end{equation}
see \cite[Lemma 1.4.2]{fy}.

Combining \eqref{polarparam} and \eqref{sphereparam} it is possible to find another parametrization of $\R^n$, given by the function
\begin{equation}\label{polarsphereparam}
    \begin{split}
        \R^+ \times (0,\pi) \times \mathbb{S}^{n-2} &\rightarrow \R^n \\
        (r,\theta,\xi) &\mapsto \mathscr{P}(r,\mathscr{S}(\theta,\xi))=(r\cos(\theta) ,r\sin(\theta) \xi).
    \end{split}
\end{equation}
The \emph{stereographic projection} of $\partial B_1$, with respect to the \emph{south pole} $-p$, is given by the function 
\begin{equation*}
    \begin{split}
           \mathscr{X}: \partial B_1 \setminus \{-p\} &\rightarrow \R^{n-1} \\
        \phi=(\hat\phi,\phi_n) &\mapsto \hat x = \frac{\hat\phi}{1 + \phi_{n}}.
    \end{split}
\end{equation*}
It is know that $(\partial B_1 \setminus \{-p\}, g_{\partial B_1})$ is isometric to $(\R^{n-1}, \Upsilon^2 g_{\R^{n-1}})$, where the analytic function $\Upsilon: \R^{n-1} \rightarrow \R$ is defined by
$$\Upsilon(\hat x) = \frac{2}{1 + |\hat x|^2}$$
for every $\hat x \in \R^{n-1}$, see \cite[Proof of Lemma 3.4, p. 37]{lee}.
Let $v: \R^{n-1} \rightarrow \R$ be a function and define $v_{\mathscr{X}^{-1}}:=v \circ\mathscr{X}^{-1}$.
Suppose in addition that $v$ is twice differentiable and denote $\Delta_\Upsilon$ the Laplace-Beltrami operator induced on $\R^{n-1}$ by the metric $\Upsilon^2 g_{\R^{n-1}}$, it holds
\begin{equation}\label{Laplacestereo}
    \Delta_\Upsilon v=\frac{1}{\Upsilon^2} \left( \Delta v + (n-2) \nabla v  \cdot \nabla \log \Upsilon \right). 
\end{equation}
It is possible to retrieve this relation by direct calculation exploiting \cite[equation (33), p. 5]{c}.

In the following it will be useful to produce functions on $\partial B_1$ that are radially symmetric (with respect to a point of $\partial B_1$), namely whose superlevel sets are spherical caps with the same center, see \cite[Sections 1.1, 1.2 and 7.1]{bdl}. We notice that each of these functions can be obtained by as a composition of a map that is radially symmetric with respect to $p$ and an isometry of $\partial B_1$. We denote $|\cdot|$, with a little abuse with respect to the norm of a vector, the $(n-1)$-dimensional Hausdorff measure on $\partial B_1$.
We recall that, given a non-negative measurable function $u : \partial B_1 \rightarrow \R$, its \emph{distribution function}, i.e., the map that describes the measure of the superlevel sets, $\mu_u: [0,\infty) \rightarrow[0,|\partial B_1|]$ is defined as
    \begin{equation*}
        \mu_u(t)=|\{\phi \in \partial B_1 \, : \, u(\phi)> t\}|
    \end{equation*}
    for every $0 \le t < +\infty$. In addition we can define its 
    \textit{decreasing rearrangement} $u^*:[0,|\partial B_1|] \rightarrow [0,\infty)$ as
    \begin{equation*}
         u^*(s)=\inf\{ t \ge 0 \, : \, \mu_u(t)\le s\}
    \end{equation*}
    for every $s \in [0,|\partial B_1|]$. Now it is useful to introduce the map $M: \partial B_1 \rightarrow [0,|\partial B_1|]$, defined as 
    $$M(\phi)=|\Gamma(\arccos( p \cdot \phi))|=|\Gamma(\theta)|$$
    for every $\phi \in \partial B_1$. Finally, the \textit{symmetric decreasing rearrangement} $u^\#: \partial B_1 \rightarrow (0,+\infty)$ is defined as
    $$u^\#(\phi)=( u^* \circ M)(\phi)$$
    for every $\phi \in \partial B_1$ (extended by $0$ where $u^*$ is not defined). The function $u^\#$ is radially symmetric with respect to $p$ and decreases as the colatitude increases.

\begin{figure}[htp]
    \centering
   \begin{tikzpicture}[scale=1]

\def\Ra{2}     
\def\h{0.6}    
\pgfmathsetmacro{\a}{sqrt(\Ra*\Ra - (\Ra - \h)*(\Ra - \h))}  

\draw[thick] (0,0) circle(\Ra);

\fill[blue!30, opacity=0.5]
  (-\a,\Ra - \h)
  arc[start angle=180, end angle=0, radius=\a]
  -- (\a,\Ra - \h)
  arc[start angle=0, end angle=180, x radius=\a, y radius=0.1]
  -- cycle;

\draw[thick] (0,\Ra - \h) ellipse({\a} and {0.1});

\node at (2,-\Ra ) {$\partial B_1$};

\fill (0,0) circle (1pt);
\node[below left] at (0,0) {$\mathbf{0}$};

\fill (0,\Ra) circle (1pt);
\node[above right] at (0,\Ra) {$p$};

\end{tikzpicture}
\vspace{5mm}
    \caption{The graph of a function that is radially symmetric with respect to $p$ (in violet) decreasing as the colatitude increases.}
    \label{}
\end{figure}

\section{The first Dirichlet eigenvalue}\label{s.2.2}
For the first part of this section we follow \cite{c}, where a regularity hypothesis on the boundary of a set and connectedness of the set are also required, see \cite[p. 1]{c}. Since the results and the definitions we borrow are valid even without these assumptions (with the appropriate modifications), we continue to refer to \cite{c}.

Let $\Gamma \subset \partial B_1$ be an open set, the \emph{first Dirichlet eigenvalue of $\Gamma$}, see \cite[p. 17]{c}, is
\begin{equation}\label{spherepseudoeigen}
    \lambda(\Gamma)=\min_{\substack{u \in H_0^1(\Gamma) \\ u \not \equiv 0}} \ddfrac{\int_{\Gamma} |\nabla_\phi u(\phi)|^2 \, d\sigma}{\int_\Gamma u(\phi)^2 \, d\sigma}.
\end{equation}
A function $u$ achieving equality in \eqref{spherepseudoeigen} is called \emph{(first Dirichlet) eigenfunction corresponding to $\lambda(\Gamma)$} and by regularity theory (combining \eqref{Laplacestereo} and \cite[Analyticity Theorem, p. 136]{bjs}), we have that $u$ is analytic in $\Gamma$ and solves
\begin{equation}\label{eq:eigen}
    -\Delta_\phi u(\phi) = \lambda(\Gamma) u(\phi) \quad \text{for every } \phi \in\Gamma.
\end{equation}
The eigenfunctions are, as usual, extended by zero in $\partial B_1 \setminus \Gamma$. By the \emph{Courant's nodal domain Theorem}, see \cite[p. 19]{c}, an eigenfunction corresponding to $\lambda(\Gamma)$ has only a nodal domain, namely the set $\Gamma \setminus\{u=0\}$ has only a connected component. In particular, when $\Gamma$ is connected, the eigenfunction corresponding to $\lambda(\Gamma)$ is unique, up to scalar multiples (in this case the eigenvalue is called \emph{simple}). In addition, let $\psi: \partial B_1 \rightarrow \partial B_1 $ be an isometry (of $\partial B_1$), we notice that $\lambda(\Gamma)=\lambda(\psi(\Gamma))$.

A central role in the proof of the ACF formula will be played by the properties of the function describing the first Dirichlet eigenvalue of a spherical cap in terms of its colatitude, so we define
    $\lambda(\theta_0):=\lambda(\Gamma(\theta_0))$.
In the case $\theta_0=\pi/2$, by \eqref{polarsphereparam}, we notice that the function $u : \Gamma(\pi/2) \rightarrow \R$, defined by 
$$\quad u_\mathscr{S}(\theta,\xi)=\cos(\theta),$$
for every $(\theta,\xi) \in \mathscr{S}^{-1}(\Gamma(\pi/2))$, is the restriction to the sphere $\partial B_1$ of the positive part of the first euclidean coordinate. In particular $u$ is positive in $\Gamma(\pi/2)$ and vanishes on $\partial \Gamma(\pi/2)$. By \eqref{Laplacespheretheta} we have that $u$ solves \eqref{eq:eigen} with 
\begin{equation}\label{eigenhalfcap}
    \lambda(\pi/2)=n-1,
\end{equation}
so it is an eigenfunction corresponding to $\lambda(\pi/2)$.

We present two useful facts regarding homogeneous functions.
To do so we define the \emph{cone generated by $\Gamma$} with vertex in the origin (of $\R^n$) as the set
$$\{x \in \R^n \, : x =r\phi ,\, \textup{with } \phi \in \Gamma \textup{ and } r \in \R^+ \}.$$

\begin{figure}[H]
    \centering
    
   \begin{tikzpicture}[scale=1]

\def\Ra{2}     
\def\h{0.6}    
\def\Rmax{3}   

\pgfmathsetmacro{\a}{sqrt(\Ra*\Ra - (\Ra - \h)*(\Ra - \h))}
\pgfmathsetmacro{\angle}{acos((\Ra - \h)/\Ra)}

\draw[thick] (0,0) circle(\Ra);

\draw[thick] (0,\Ra - \h) ellipse({\a} and {0.1});

\node[right] at ({\a + 0.1},{\Ra - \h}) {$\Gamma$};
\node at (2,-\Ra ) {$\partial B_1$};

\fill (0,0) circle (1pt);
\node[below left] at (0,0) {$\mathbf{0}$};

\pgfmathsetmacro{\norm}{sqrt(\a*\a + (\Ra - \h)*(\Ra - \h))}
\pgfmathsetmacro{\uxL}{-1*\a/\norm}
\pgfmathsetmacro{\uyL}{(\Ra - \h)/\norm}
\pgfmathsetmacro{\uxR}{\a/\norm}
\pgfmathsetmacro{\uyR}{(\Ra - \h)/\norm}

\draw[thick, orange!80!black] (0,0) -- ({\Rmax*\uxL}, {\Rmax*\uyL});
\draw[thick, orange!80!black] (0,0) -- ({\Rmax*\uxR}, {\Rmax*\uyR});

\path[clip] 
  (0,0) -- ({\Rmax*\uxL}, {\Rmax*\uyL}) 
  arc[start angle=180 - \angle, end angle=\angle, radius=\Rmax] -- cycle;
\fill[orange!20, opacity=0.3] (0,0) circle(\Rmax);

\end{tikzpicture}
    
    \caption{The cone generated by a set $\Gamma \subset \partial B_1$ with vertex in the origin (of $\R^3$).}
    \label{fig:enter-label}
\end{figure}

\begin{prop}\label{p.harmextens}
    Let $\alpha>0$, $u$ be an eigenfunction corresponding to $\lambda(\Gamma)$ and $w: \R^n\rightarrow \R$ be the function defined by
            $$w_{\mathscr{P}}(r,\phi)=r^{\alpha} u(\phi),$$
    for every $(r,\phi) \in \R^+ \times \partial B_1$. There exists a unique positive value $\alpha=\alpha(\Gamma)$, called the \textit{characteristic constant} of the set $\Gamma$, such that $w$ is harmonic in the cone generated by $\Gamma$ with vertex in the origin. This values satisfies
    \begin{equation}\label{def.alpha}
    \alpha(\Gamma)=\sqrt{\bigg( \dfrac{n-2}{2} \bigg)^2 + \lambda(\Gamma)}- \dfrac{n-2}{2}.
\end{equation}
\end{prop}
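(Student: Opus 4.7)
The plan is to plug $w(r,\phi) = r^\alpha u(\phi)$ directly into the Laplacian expressed in polar coordinates (formula \eqref{Laplacian in polar coordinates classical ACF formula}) and use that $u$ satisfies the eigenfunction equation \eqref{eq:eigen}. Concretely, I would compute $w_r = \alpha r^{\alpha-1}u(\phi)$ and $w_{rr} = \alpha(\alpha-1)r^{\alpha-2}u(\phi)$, and observe that $\Delta_\phi w = r^\alpha \Delta_\phi u(\phi) = -\lambda(\Gamma) r^\alpha u(\phi)$, since the spherical Laplacian acts only on the angular variables.

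Substituting in \eqref{Laplacian in polar coordinates classical ACF formula} and factoring out $r^{\alpha-2}u(\phi)$, the Laplacian of $w$ becomes
\begin{equation*}
\Delta w(r,\phi) = r^{\alpha-2} u(\phi) \bigl[\alpha(\alpha-1) + (n-1)\alpha - \lambda(\Gamma)\bigr] = r^{\alpha-2}u(\phi)\bigl[\alpha^2 + (n-2)\alpha - \lambda(\Gamma)\bigr].
\end{equation*}
Since $u\not\equiv 0$ in $\Gamma$, harmonicity of $w$ in the cone generated by $\Gamma$ is equivalent to the algebraic condition $\alpha^2 + (n-2)\alpha - \lambda(\Gamma) = 0$. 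The quadratic formula produces the two roots $-\tfrac{n-2}{2} \pm \sqrt{\bigl(\tfrac{n-2}{2}\bigr)^2 + \lambda(\Gamma)}$.

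Finally, I would observe that $\lambda(\Gamma)>0$ (this follows from the Poincaré-type nature of the Rayleigh quotient in \eqref{spherepseudoeigen} once $\Gamma$ is a proper open subset of $\partial B_1$, which is the case of interest), so the root with the minus sign is strictly negative while the root with the plus sign is strictly positive. This gives the uniqueness and the explicit formula \eqref{def.alpha}. There is no real obstacle in this proof; the only point requiring a brief comment is why the positive root is unique, which reduces to the sign of $\lambda(\Gamma)$.
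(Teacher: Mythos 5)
Your proposal is correct and follows essentially the same route as the paper: substitute the ansatz into the polar-coordinate Laplacian, invoke the eigenfunction equation to trade $\Delta_\phi u$ for $-\lambda(\Gamma)u$, and reduce to the quadratic $\alpha^2+(n-2)\alpha-\lambda(\Gamma)=0$. The one thing you add — the remark that $\lambda(\Gamma)>0$ forces exactly one positive root — is a small but legitimate extra detail that the paper leaves implicit.
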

\begin{proof}
    Requiring $w$ to be harmonic, by \eqref{Laplacian in polar coordinates} we have
\begin{equation*}
    0=r^{\alpha-2}\big[\alpha(\alpha-1)+\alpha(n-1) \big] u(\phi) +r^{\alpha-2}\Delta_\phi u(\phi)
\end{equation*}
for every $(r,\phi) \in \R^+ \times \partial B_1$, that is equivalent by \eqref{eq:eigen} to
\begin{equation*}
    \alpha^2+(n-2)\alpha-\lambda(\Gamma)=0,
\end{equation*}
giving the desired conclusion.
\end{proof}

As a consequence of \eqref{eq:polarchange} and \eqref{Gradient in polar coordinates} we have the following result.
\begin{lem}\label{l.inthom}
Let $\alpha>0$, $u: \partial B_1 \rightarrow \R$ be a differentiable function and $w :\R^n  \rightarrow \R$ be the function defined by
$$w_{\mathscr{P}}(r,\phi)=r^\alpha u(\phi)$$
for every $ (r,\phi)\in \R^+ \times \partial B_1 $. Then
\begin{gather*}
        \int_{B_1} \frac{|\nabla w(x)|^2}{|x|^{n-2}}  \, dx = \frac{1}{2\alpha} \int_{\partial B_1} \left[\alpha^2 u(\phi)^2+|\nabla_\phi u(\phi)|^2 \right] \, d\sigma.
\end{gather*}
\end{lem}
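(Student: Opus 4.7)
The plan is a direct computation in polar coordinates, combined with Fubini's theorem. The key observation is that for a smooth function $v = v(r,\phi)$ on $\R^n \setminus \{\mathbf{0}\}$ the Euclidean gradient splits as $|\nabla v|^2 = v_r^2 + r^{-2}|\nabla_\phi v|^2$, where $\nabla_\phi$ denotes the tangential gradient on $\partial B_1$; this is the same decomposition underlying formula \eqref{Laplacian in polar coordinates classical ACF formula}.

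Applied to $w(r,\phi) = r^\alpha u(\phi)$ one has $w_r = \alpha r^{\alpha - 1}u(\phi)$ and $\nabla_\phi w = r^\alpha \nabla_\phi u(\phi)$, so
\begin{equation*}
|\nabla w(r,\phi)|^2 = r^{2\alpha - 2}\bigl(\alpha^2 u(\phi)^2 + |\nabla_\phi u(\phi)|^2\bigr).
\end{equation*}
Evaluating at $r = 1$ shows that this quantity coincides with $|\nabla u(x)|^2$ in the notation of the statement, namely the full Euclidean gradient of the homogeneous extension $w$ restricted to the unit sphere.

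Combining with $|x|^{-(n-2)} = r^{-(n-2)}$ and with the polar volume element $dx = r^{n-1}\,dr\,d\sigma$, the integrand reduces to $r^{2\alpha - 1}\bigl(\alpha^2 u^2 + |\nabla_\phi u|^2\bigr)$, and Fubini then gives
\begin{equation*}
\int_{B_1}\frac{|\nabla w|^2}{|x|^{n-2}}\,dx \;=\; \Bigl(\int_0^1 r^{2\alpha - 1}\,dr\Bigr)\int_{\partial B_1}\bigl(\alpha^2 u^2 + |\nabla_\phi u|^2\bigr)\,d\sigma \;=\; \frac{1}{2\alpha}\int_{\partial B_1}|\nabla u|^2\,d\sigma.
\end{equation*}
The hypothesis $\alpha > 0$ is used precisely to ensure that $\int_0^1 r^{2\alpha - 1}\,dr = \frac{1}{2\alpha}$ converges.

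There is no substantive obstacle; the only point requiring attention is the interpretation of $|\nabla u|$ on $\partial B_1$ as the restriction of the gradient of the homogeneous extension (which is why both the radial contribution $\alpha^2 u^2$ and the tangential contribution $|\nabla_\phi u|^2$ survive in the surface integral, rather than just $|\nabla_\phi u|^2$).
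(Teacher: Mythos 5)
The paper states this lemma without proof, so there is no proof in the source to compare against; your computation is the natural one and it is correct. The polar decomposition $|\nabla w|^2 = w_r^2 + r^{-2}|\nabla_\phi w|^2$, the homogeneity giving $|\nabla w|^2 = r^{2\alpha-2}(\alpha^2 u^2 + |\nabla_\phi u|^2)$, the cancellation of powers of $r$ against $|x|^{n-2}$ and the volume element, and the evaluation $\int_0^1 r^{2\alpha-1}\,dr = \frac{1}{2\alpha}$ all check out. You also correctly flag the one point that could trip a reader up: in the statement, $|\nabla u(x)|$ for $x \in \partial B_1$ must be read as the restriction of the full Euclidean gradient of the homogeneous extension $w$ (so that it carries the radial term $\alpha^2 u^2$ as well as the tangential term $|\nabla_\phi u|^2$), and not merely the tangential gradient $\nabla_\phi u$ of a function on the sphere. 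This reading is forced by the way the lemma is invoked in Step 3 of the proof of Theorem \ref{t.ACF}, where the conclusion $I'(1,w)/I(1,w) = 2\alpha$ requires exactly $\int_{\partial B_1}|\nabla u|^2\,d\sigma = \int_{\partial B_1}|\nabla w|^2\,d\sigma$.
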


\begin{prop}\label{p.symmetrized}
Let $\Gamma \subset \partial B_1$ be an open set, then $$\alpha(\Gamma) \ge \alpha(\Gamma^\#),$$
where $\Gamma^\#$ is a spherical cap with $|\Gamma^\#|\le |\Gamma|$. In particular if $\Gamma$ is a spherical cap, then an eigenfunction $w$ corresponding to $\lambda(\Gamma)$ is radially symmetric with respect to the center of $\Gamma$. Namely there exist an isometry $\psi: \partial B_1 \rightarrow \partial B_1$ and a function $w^\mathscr{R}: [0,\pi) \rightarrow \R$ such that 
$$(w\circ \psi)_\mathscr{S}(\theta,\xi)=w^\mathscr{R}(\theta),$$
for every $(\theta,\xi) \in (0,\pi) \times \mathbb{S}^{n-2}$.
\end{prop}
\begin{proof}
Let $u$ be a non-negative eigenfunction corresponding to $\lambda(\Gamma)$ and define the open set
$$\Gamma^+:=\{\phi \in \partial B_1 \, : \, u(\phi)>0\}.$$
We have that $|\Gamma^+| \le |\Gamma|$ and $u$ is a non-negative eigenfunction corresponding to $\lambda(\Gamma^+)$. Consider $u^\#$, the symmetric decreasing rearrangement of $u$, and define the open set
$$\Gamma^\#:=\{\phi \in \partial B_1 \, : \, u^\#(\phi)>0\}.$$
It is a spherical cap, moreover $u$ and $u^\#$ are equidistributed, see \cite[Proposition 1.30 (a) and p. 219]{bdl}, and in particular it holds $|\Gamma^\#|=|\Gamma^+|$. Therefore, by the so-called \emph{P\'olya-Szeg\H{o} inequality} on the sphere, see \cite[Theorem 7.4]{bdl}, we have $u^\# \in H^1_0(\Gamma^\#)$ and from \eqref{spherepseudoeigen} we obtain 
\begin{equation}\label{eq:radialeigen}
    \lambda(\Gamma)
=\lambda(\Gamma^+)
= \ddfrac{\int_{\Gamma^+} |\nabla_\phi u(\phi)|^2 \, d\sigma}{\int_{\Gamma^+} u(\phi)^2 \, d\sigma}
\ge \ddfrac{\int_{\Gamma^\#} |\nabla_\phi u^\#(\phi)|^2 \, d \sigma}{\int_{\Gamma^\#} u^\#(\phi)^2 \, d\sigma}
\ge\lambda(\Gamma^\#),
\end{equation}
by \eqref{def.alpha} we retrieve the first statement. 

For the second statement we notice that if $\Gamma$ is a spherical cap, then there exists an isometry $\psi: \partial B_1 \rightarrow \partial B_1$ such that $\Gamma=\psi(\Gamma^\#)$. Since the inequalities in \eqref{eq:radialeigen} are equalities we have that $u^\#$ and $u \circ\psi$ are eigenfunction corresponding to $\lambda(\Gamma^\#)$. By the connectedness of $\Gamma^\#$ and the fact that $u^\#$ and $u$ are equidistributed we obtain $u^\#=u \circ\psi$, which gives the desired thesis.
\end{proof}
In the following the function $V:(0,\pi) \rightarrow \R$ defined by 
$$V(\theta)= \bigg(\frac{n-2}{2}\bigg)\bigg(\bigg(\frac{n-4}{2}\bigg)(\cot\theta)^2-1\bigg)$$
for every $\theta \in (0,\pi)$, where $\cot(\cdot)$ is the \emph{cotangent function}, will play a central role.
Since the minimum in \eqref{spherepseudoeigen} when $\Gamma$ is a spherical cap is reached by a function that is radially symmetric with respect to the center of $\Gamma$, we can narrow down the set of optimal maps to one-variable (the colatitude) functions.
\begin{lem}\label{l.infspherefunct}
Let $\theta_0 \in (0,\pi)$, then
\begin{equation}\label{eq:newRayleigh}
    \lambda(\theta_0)
    =\min_{\substack{v \in H_0^1((0,\theta_0)) \\ v \not \equiv 0}} \ddfrac{\int_0^{\theta_0} \left[v'(\theta)^2+V(\theta)v(\theta)^2\right]\, d\theta}{\int_0^{\theta_0} v(\theta)^2\, d\theta}.
\end{equation}
There is a unique non-negative function $v$ normalized in $L^2((0,\theta_0))$ achieving the minimum in \eqref{eq:newRayleigh}. This function satisfies
$$v(\theta)=w^\mathscr{R}(\theta)(\sin\theta)^{\frac{n-2}{2}}$$
for every $\theta \in [0,\theta_0]$, where $w$ is an eigenfunction corresponding to $\lambda(\theta_0)$ and $w^\mathscr{R}$ is defined as in Proposition \ref{p.symmetrized}. Moreover it holds
 $v \in C^\infty([0,\theta_0])$, $v(0)=0$, $v(\theta_0)=0$ and $v(\theta)>0$ for every $\theta \in (0,\theta_0)$.
\end{lem}
\begin{proof}
    Let $v \in H_0^1((0,\theta_0))$ such that the objective function in \eqref{eq:newRayleigh} is finite. Recall that by the \emph{Sobolev embedding Theorem} $v \in C([0,\theta_0])$, moreover it is a.e. differentiable in $(0,\theta_0)$ and by \cite[Theorem 2, p. 273]{eva} we have $v(\theta_0)=0$. Let $u: [0,\theta_0] \rightarrow \R$ be the function defined by 
        $$v(\theta)=u(\theta)(\sin\theta)^{\frac{n-2}{2}}$$
    for every $\theta \in [0,\theta_0]$, we have $u(\theta_0)=0$. Differentiating it we obtain
    $$u'(\theta)(\sin\theta)^{\frac{n-2}{2}}=v'(\theta) - \bigg(\frac{n-2}{2}\bigg)\cot(\theta)v(\theta) \quad \text{for a.e. } \theta \in (0,\theta_0).$$
    An integration by part yields
    $$\int_0^{\theta_0} (2v'(\theta)v(\theta)) \cot(\theta) \, d\theta=\int_0^{\theta_0} \frac{v(\theta)^2}{(\sin \theta)^2} d\theta,$$
    therefore by combining these relations we obtain
    $$\ddfrac{\int_0^{\theta_0} u'(\theta)^2(\sin \theta)^{n-2} \, d\theta}{\int_0^{\theta_0} u(\theta)^2(\sin \theta)^{n-2} \, d\theta}=\ddfrac{\int_0^{\theta_0} \left[v'(\theta)^2+V(\theta)v(\theta)^2\right]\, d\theta}{\int_0^{\theta_0} v(\theta)^2\, d\theta}.$$
    Let $w: \overline{\Gamma(\theta_0)} \rightarrow \R$ be the function, radially symmetric with respect to $p$, defined by
    $$w_\mathscr{S}(\theta,\xi)=u(\theta)$$
    for every $(\theta,\xi) \in [0,\theta_0] \times \mathbb{S}^{n-2}$, we have $w=0$ on $\partial \Gamma(\theta_0)$. 
    By \eqref{eq:sphericalchange} and \eqref{Gradient in spherical coordinates} we obtain 
\begin{equation*}
    \ddfrac{\int_{\Gamma(\theta_0)} |\nabla_\phi w(\phi)|^2 \, d\sigma}{\int_{\Gamma(\theta_0)}  w(\phi)^2 \, d\sigma}=\ddfrac{\int_0^{\theta_0} u'(\theta)^2(\sin \theta)^{n-2} \, d\theta}{\int_0^{\theta_0} u(\theta)^2(\sin \theta)^{n-2} \, d\theta},
\end{equation*}
therefore $w \in H_0^1(\Gamma(\theta_0))$. By combining these relations and \eqref{spherepseudoeigen} we have
$$\ddfrac{\int_0^{\theta_0} \left[v'(\theta)^2+V(\theta)v(\theta)^2\right]\, d\theta}{\int_0^{\theta_0} v(\theta)^2\, d\theta}
=\ddfrac{\int_{\Gamma(\theta_0)} |\nabla_\phi w(\phi)|^2 \, d\sigma}{\int_{\Gamma(\theta_0)}  w(\phi)^2 \, d\sigma}\ge \lambda(\theta_0),$$
equality holds if and only if $w$ is an an eigenfunction corresponding to $\lambda(\theta_0)$.

Assume that $w$ is an an eigenfunction corresponding to $\lambda(\theta_0)$, by Proposition \ref{p.symmetrized} and the above relations we have 
\begin{equation}\label{eq.v-w}
    v(\theta)=w^\mathscr{R}(\theta)(\sin\theta)^{\frac{n-2}{2}}
\end{equation}
for every $\theta \in [0,\theta_0]$. Since $\Gamma(\theta_0)$ is a connected set, the eigenvalue $\lambda(\theta_0)$ is simple and $w \ne 0$ in $\Gamma(\theta_0)$. Moreover, since $\Gamma(\theta_0)$ is a set of class $C^\infty$, by \cite[Theorem 1, p. 8]{c} we obtain that $w \in C^\infty(\overline{\Gamma(\theta_0)})$. By imposing the conditions of non-negativity and normalization on $v$, using  \eqref{eq.v-w} and the analyticity of $\mathscr{S}$ we retrieve the desired thesis.
\end{proof}

\section{A convexity property}\label{s.3}
This section is dedicated to the introduction of the main result exploited in the proof of the Friedland-Hayman inequality: the convexity of the one-dimensional function that describes the first eigenvalue of a spherical caps in terms of its colatitude. This fact was proved in a more general setting (and using more sophisticated tools) in \cite[Corollary 1.15]{bl}. We follow the approach of \cite{k}. It is sufficient to retrieve it in dimension greater than or equal to $5$, since it will be exploited in a limiting process in Section \ref{s.4}.

\begin{prop}\label{propconvex}
The function $\lambda: (0,\pi) \rightarrow \R$, defined by \eqref{eq:newRayleigh}
for every $\theta_0 \in (0,\pi)$, is twice differentiable, strictly decreasing and
\begin{equation}\label{lambda(0)}
    \lim_{\theta_0 \rightarrow 0^+} \lambda(\theta_0)=+\infty.
\end{equation}
Moreover if $n \ge 5$, it is a strictly convex function.
\end{prop}
\begin{proof}
Let $\theta_0 \in (0,\pi)$ and denote $v=v_{\theta_0}$ the unique non-negative function normalized in $L^2((0,\theta_0))$ that achieves the minimum in \eqref{eq:newRayleigh}, it holds
\begin{equation*}
    \lambda(\theta_0)=\int_0^{\theta_0} \left[v'(\theta)^2+V(\theta)v(\theta)^2 \right] \, d\theta.
\end{equation*}
By Lemma \ref{l.infspherefunct} we have $v \in C^\infty([0,\theta_0])$, $v(0)=0$, $v(\theta_0)=0$ and $v(\theta)>0$ for every $\theta \in (0,\theta_0)$, which yields
\begin{equation}\label{deriv>0}
   v'(0) > 0 \quad , \quad v'(\theta_0) < 0.
\end{equation}
The function $v$ is the unique non-negative solution of the boundary value problem
\begin{gather}\label{eqbvp0}
\begin{cases}
    -v''(\theta)+V(\theta)v(\theta)=\lambda(\theta_0)v(\theta) &  \text{for every } \theta \in (0,\theta_0), \\
    v(0)=v(\theta_0)=0, \\
    \int_0^{\theta_0} v(\theta)^2 \, d\theta=1. &
\end{cases}
\end{gather}
The first relation in \eqref{eqbvp0} is the so-called \emph{Euler-Lagrange equation} associated to \eqref{eq:newRayleigh}.
Consider the set
$$T=\{ (\theta, \theta_0) \subset [0,\pi] \times (0,\pi) \, : \, \theta \le \theta_0 \},$$
it is possible define a composite function as
$$v: T \rightarrow \R, \quad (\theta,\theta_0) \mapsto v(\theta,\theta_0)=v_{\theta_0}(\theta).$$
We indicate its derivative in the first argument as $v'$, while the one in the second argument as $\dot{v}$. This new function satisfies 
\begin{gather}\label{eqbvp}
\begin{cases}
    -v''(\theta,\theta_0)+V(\theta)v(\theta,\theta_0)=\lambda(\theta_0)v(\theta,\theta_0) &  \text{for every } (\theta,\theta_0) \in \overset{\circ}{T}, \\
    v(0,\theta_0)=v(\theta_0,\theta_0)=0 & \text{for every } \theta_0 \in (0,\pi), \\
    \int_0^{\theta_0} v(\theta,\theta_0)^2 \, d\theta=1 & \text{for every } \theta_0 \in (0,\pi),
\end{cases}
\end{gather}
and it holds
\begin{equation}\label{normeigen}
    \lambda(\theta_0)=\int_0^{\theta_0} \left[ v'(\theta,\theta_0)^2+V(\theta)v(\theta,\theta_0)^2 \right] \, d\theta \quad \text{for every } \theta_0 \in (0,\pi).
\end{equation}
We denote by $\dot{\lambda}$ and $\ddot{\lambda}$ the first and second derivative of the function $\lambda$, respectively. In the following we will always evaluate the function $v$ and its derivatives at a fixed second coordinate $\theta_0$, so, whenever present, the argument of these functions will always correspond to the first one. The proof is divided in five steps.

\medskip
\emph{Step 1 (regularity of $\lambda$).} We claim that $\lambda \in C^\infty(0,\pi)$. By direct calculation we have $\mathscr{X}(\Gamma(\theta_0))= B_{\!\scriptscriptstyle \wedge}$, where $ B_{\!\scriptscriptstyle \wedge}$ is the ball of radius $\tan(\theta_0/2)$ in $\R^{n-1}$ with center in the origin. Let $\tilde u:\R^{n-1} \rightarrow \R$ be a solution of the eigenvalue problem 
\begin{equation}\label{eq:stereoeigenpb0}
    \left\{
\begin{array}{ll}
\dfrac{1}{\Upsilon^2} \left( \Delta \tilde  u + (n-2) \nabla \tilde  u  \cdot \nabla \log \Upsilon \right)+\lambda(\theta_0) \tilde  u=0 & \textup{in }   B_{\!\scriptscriptstyle \wedge},
\\ 
\tilde  u=0 & \textup{on } \partial  B_{\!\scriptscriptstyle \wedge}.
\end{array}
\right.
\end{equation}
By \eqref{Laplacestereo} we notice that the function $\tilde  u_{\mathscr{X}^{-1}}$ is an eigenfunction corresponding to $\lambda(\theta_0)$.
Let $u_{\theta_0}$ be the unique non-negative solution of \eqref{eq:stereoeigenpb0} satisfying the normalization condition
$$\int_{ B_{\!\scriptscriptstyle \wedge}} u_{\theta_0}(\hat x)^2 \,d\hat x=1,$$
we define $\textbf{u}_{\theta_0}:=(u_{\theta_0})_{\mathscr{X}^{-1}}$.
Let $\delta \in \R$ such that $|\delta|\ll\theta_0$, we define the function $h_\delta: \R^{n-1} \rightarrow \R^{n-1}$ as
$$h_\delta(\hat x)=\dfrac{\tan\left(\dfrac{\theta_0+\delta}{2}\right)}{\tan(\theta_0/2)}\hat x$$
for every $\hat x \in \R^{n-1}$.
We notice that $h_\delta(B_{\!\scriptscriptstyle \wedge})=B_{\!\scriptscriptstyle \wedge}^\delta$, where $ B_{\!\scriptscriptstyle \wedge}^\delta$ is the ball of radius $\tan(\theta_0/2)+\delta$ in $\R^{n-1}$ with center in the origin.
Since the operator in \eqref{eq:stereoeigenpb0} has analytic coefficients, is uniformly elliptic on bounded sets, $\lambda(\theta_0)$ is simple and $ B_{\!\scriptscriptstyle \wedge}$ is a set of class $C^\infty$, it is possible to follow the same strategy of \cite[Example 3.2, p. 33]{hen} with the family $\{h_\epsilon\}_{\epsilon \in (-\delta,\delta)}$ to conclude that $\lambda \in C^\infty(0,\pi)$.

\medskip
\emph{Step 2 (regularity of $v$).} We claim that $v \in C^\infty(T)$. Consider the set
$$\mathcal{T}=\{ (\hat x, \theta_0) \subset \R^{n-1} \times (0,\pi) \, : \, |\hat x| \le \tan(\theta_0/2) \},$$
it is possible define a composite function as
$$u: \mathcal{T}  \rightarrow \R, \quad (\hat x,\theta_0) \mapsto u(\hat x,\theta_0)=u_{\theta_0}(\hat x).$$
Let $\underline{u}_{\theta_0}: \R^{n-1} \rightarrow \R$ be the extension of $u_{\theta_0}$ defined in \cite[Example 3.2, p. 33]{hen} exploiting \cite[Theorem 1.9]{hen}, we have $\underline{u}_{\theta_0}(\hat x)=u_{\theta_0}(\hat x)$ for every $\hat x \in \overline{B_{\!\scriptscriptstyle \wedge}}$. The construction of Step 1 allows also to conclude that for every $i \in \N$ the map
\begin{equation}\label{eq:extesionmap}
    (-\delta,\delta) \rightarrow H^i(\R^{n-1}), \quad \epsilon \mapsto \underline{u}_{\theta_0+\epsilon}
\end{equation}
is $C^\infty$. We use the symbol $\,\dot{}\,$ to denote the derivative with respect to the $1$-dimensional parameter used to describe the family $\{h_\epsilon\}_{\epsilon \in (-\delta,\delta)}$. By the \emph{Sobolev embedding Theorem} the functions $\underline{u}_{\theta_0}$ and $\dot{\underline{u}}_{\theta_0}$ belong to $C^\infty( B_{\!\scriptscriptstyle \wedge}^\delta)$. Moreover, from the regularity of the map \eqref{eq:extesionmap} it holds
$$\frac{\underline{u}_{\theta_0+\epsilon}(\hat x)-\underline{u}_{\theta_0}(\hat x)}{\epsilon}=\dot{\underline{u}}_{\theta_0}(\hat x) +\tau_\epsilon(\hat x)$$
for every $(\hat x,\epsilon) \in  B_{\!\scriptscriptstyle \wedge}^\delta \times(-\delta,\delta)$, where the differentiable function $\tau_\epsilon: B_{\!\scriptscriptstyle \wedge}^\delta \rightarrow \R$ satisfies $\tau_\epsilon \rightarrow 0$ in $C^1(B_{\!\scriptscriptstyle \wedge}^\delta)$ as $\epsilon \rightarrow 0$. Differentiating with respect to the coordinates of $\R^{n-1}$ and then letting $\epsilon \rightarrow 0$ we obtain
$$\dot{(\nabla\underline{u}_{\theta_0})}(\hat x)=\nabla\dot{\underline{u}}_{\theta_0}(\hat x)$$
for every $\hat x \in  B_{\!\scriptscriptstyle \wedge}^\delta$. We notice that a similar reasoning can also be applied to higher-order derivatives, therefore restricting ourselves to $\overline{B_{\!\scriptscriptstyle \wedge}}$ we obtain that $u \in C^\infty(\mathcal{T})$.
As a byproduct, since $\mathscr{S}$ and $\mathscr{X}$ are analytic, we have that the function
$$\textbf{u}^\mathscr{R}: T  \rightarrow \R, \quad (\theta,\theta_0) \mapsto \textbf{u}^\mathscr{R}(\theta,\theta_0)=\textbf{u}^\mathscr{R}_{\theta_0}(\theta)$$
belongs to $C^\infty(T)$.
Recall that by Lemma \ref{l.infspherefunct} we have
\begin{equation}\label{eq:vproduct}
    v(\theta,\theta_0)=w^\mathscr{R}_{\theta_0}(\theta)(\sin\theta)^{\frac{n-2}{2}}
\end{equation}
for every $(\theta,\theta_0) \in T$, where $w_{\theta_0}$ is a non-negative eigenfunction corresponding to $\lambda(\theta_0)$. Since both $\textbf{u}_{\theta_0}$ and $w_{\theta_0}$ are non-negative eigenfunctions corresponding to $\lambda(\theta_0)$, then they differs by a positive multiplicative factor. In particular the third condition in \eqref{eqbvp0} yields
$$w^\mathscr{R}_{\theta_0}=\left(\int_0^{\theta_0} \textbf{u}^\mathscr{R}_{\theta_0}(\theta)^2(\sin\theta)^{n-2} \, d\theta \right)^{-1/2}\textbf{u}^\mathscr{R}_{\theta_0},$$
that combined with \eqref{eq:vproduct} gives $v \in C^\infty(T)$.

\medskip
\emph{Step 3 (monotonicity and limit behavior of $\lambda$).}
We claim that the function $\lambda$ is strictly decreasing and that \eqref{lambda(0)} holds. Differentiating the second and the third relation in \eqref{eqbvp} with respect to $\theta_0$ we have
\begin{equation}\label{elementary relations for the convexity theorem}
    \dot{v}(0)=0, \quad v'(\theta_0)=-\dot{v}(\theta_0) \, , \quad \int_0^{\theta
    _0} v(\theta)\dot{v}(\theta) \, d\theta=0.
\end{equation}
Differentiating \eqref{normeigen} we obtain
\begin{align*}
        \dot{\lambda}(\theta_0)&=v'(\theta_0)^2+\int_0^{\theta_0} \left[2 v'(\theta)\dot{v}'(\theta) +2V(\theta)v(\theta)\dot{v}(\theta)\right] \, d\theta,
    \end{align*}
integrating by parts, from the first two relations in \eqref{elementary relations for the convexity theorem}, we have
\begin{equation*}
    \begin{split}
       \dot{\lambda}(\theta_0) &=-v'(\theta_0)^2+2\int_0^{\theta_0} \dot{v}(\theta)[-v''(\theta)+V(\theta)v(\theta)] \, d\theta.
    \end{split}
\end{equation*}
Thence exploiting the first relation in \eqref{eqbvp0} and the third relation in \eqref{elementary relations for the convexity theorem} we find
\begin{equation}\label{signderivmu}
    \dot{\lambda}(\theta_0)=-v'(\theta_0)^2,
\end{equation}
that combined with the second relation in \eqref{deriv>0} gives the desired strict monotonicity for the map $\lambda$. The relation \eqref{lambda(0)} follows from \cite[equation (36), p. 318]{c}.

\medskip
\emph{Step 4 (nodal domains of $\dot{v}$).} We claim that the function $\dot{v}$ has exactly two nodal domains, namely that the connected components of the set $(0,\theta_0)  \setminus  \{\dot{v}= 0\}$ are two.
We introduce the function 
$$q: (0,\theta_0) \rightarrow \R \quad , \quad  q=\frac{\dot{v}}{v},$$
whose derivatives are
\begin{equation*}
    q'=\frac{\dot{v}'}{v}-\frac{v'}{v}q
\end{equation*}
and 
\begin{equation*}
    \begin{split}
        q''&= \frac{\dot{v}''}{v}- \frac{\dot{v}'v'}{v^2}   - \frac{v''}{v}q +\left(\frac{v'}{v}\right)^2q- \frac{v'}{v}q' \\
&=\frac{\dot{v}''}{v}  - \frac{v''}{v}q - 2\frac{v'}{v}q'.
    \end{split}
\end{equation*}
By the first relation in \eqref{eqbvp0} and the derivative of the first relation in \eqref{eqbvp} with respect to the second variable restricted to the set $(0,\theta_0) \times\{\theta_0\}$, namely
\begin{equation*}\label{eqbvp derived with respect to theta0}
    -\dot{v}''(\theta)+V(\theta)\dot{v}(\theta)=\dot{\lambda}(\theta_0)v(\theta)+\lambda(\theta_0)\dot{v}(\theta) \quad \text{for every } \theta \in (0,\theta_0),
\end{equation*}
it is possible to infer
\begin{equation*}
    q''=-\dot{\lambda}(\theta_0)-2\frac{v'}{v}q'
\end{equation*}
and thus
$$(v^2 q')'=-\dot{\lambda}(\theta_0)v^2 .$$
In particular, by Step 3, the function $v^2 q': [0,\theta_0] \rightarrow \R$ is strictly increasing, so it attains its minimum at $0$. Since $v \in C^\infty(T)$, the second relation in \eqref{eqbvp0} and the first relation in \eqref{elementary relations for the convexity theorem} give 
$$\dot{v}'(0),v'(0)<\infty \quad \text{and} \quad \dot{v}(0)=v(0)=0,$$
therefore we have
\begin{equation*}
     (v^2 q')(\theta) 
     > (v^2 q')(0)
     =\dot{v}'(0)v(0) - \dot{v}(0)v'(0)=0 \quad  \text{for every } \theta \in (0,\theta_0).
\end{equation*}
So $q$ is a strictly increasing function and thus it has at most one zero in $(0,\theta_0)$. This latter fact holds also for $\dot{v}$, since $v$ is a positive function in $(0,\theta_0)$. By the third relation in \eqref{elementary relations for the convexity theorem} we notice that $\dot{v}$ is orthogonal to $v$ in $L^2((0,\theta_0))$, therefore $\dot{v}$ has non-constant sign in $(0,\theta_0)$. In particular, since $\dot{v}$ is continuous, there exists a unique value $\bar{\theta} \in (0,\theta_0)$ such that $\dot{v}(\bar{\theta})=0$. At this point we sketch the structure of $\dot{v}$. By the first two relations in \eqref{elementary relations for the convexity theorem} and the second relation in \eqref{deriv>0} we have
$$\dot{v}(0)=0, \quad  \dot{v}\big(\bar{\theta}\big)=0, \quad \dot{v}(\theta_0)=-v'(\theta_0)>0,$$
so
\begin{equation}\label{SignDotv}
    \begin{split}
        \dot{v}(\theta)<0 \quad  \text{for every } \theta \in \big(0,\bar{\theta}\big), \\
    \dot{v}(\theta)>0 \quad \text{for every } \theta \in \big(\bar{\theta},\theta_0\big),
    \end{split}
\end{equation}
and therefore 
\begin{equation}\label{DotvDeriv0}
    \dot{v}'(0)\le 0.
\end{equation}

\medskip
\emph{Step 5 (convexity of $\lambda$).}
We claim that the function $\lambda$ is strictly convex if $n \ge 5$. We notice that it is possible to express the derivative of the map $\lambda$ in another way. Multiplying the first relation in \eqref{eqbvp0} by $v'$ and integrating, by the third relation in \eqref{elementary relations for the convexity theorem} we find 
\begin{equation*}
    -v'(\theta_0)^2 = -\int_0^{\theta_0} V(\theta) \left( v(\theta)^2 \right)' \, d\theta -v'(0)^2,
\end{equation*}
that integrating by parts and using \eqref{signderivmu} yields
\begin{equation}\label{second expression for the derivative of the first eigenvalue in the convexity theorem}
    \dot{\lambda}(\theta_0)= \int_0^{\theta_0} V'(\theta)v(\theta)^2 \, d\theta - v'(0)^2.
\end{equation}
Differentiating \eqref{second expression for the derivative of the first eigenvalue in the convexity theorem}, by the third relation in \eqref{elementary relations for the convexity theorem}, we find
\begin{equation*}
\begin{split}
    \ddot{\lambda}(\theta_0)
    &=2\int_{0}^{\theta_{0}} \left[V'(\theta)-V'\big(\bar{\theta}\big)\right]v(\theta)\dot{v}(\theta) \, d\theta - 2v'(0)\dot{v}'(0) > 0.
\end{split}
\end{equation*}
The last inequality is obtained combining -- recall that $\cot(\cdot)^2$ is a strictly convex function in $(0,\pi)$ -- the strict convexity of $V$ in $(0,\pi)$ for $n \ge 5$, the information contained in \eqref{SignDotv}, the first relation in \eqref{deriv>0} and \eqref{DotvDeriv0}, and gives the desired strict convexity for the map $\lambda$.
\end{proof}

\section{Proof of Theorem \ref{t.ACF}}\label{s.4}
The proof is divided in four steps. For the first two steps we follow the original strategy of \cite[Lemma 5.1]{acf}, while for the remaining ones, which involve the proof of the Friedland-Hayman inequality, we rely on \cite[Section 4.3]{n}.

\medskip
\emph{Step 1 (reduction to an inequality and finiteness).} We claim that the function $J$ is finite and to obtain its monotonicity it is sufficient to prove an inequality.
Let $u \in C^2(B_2)$ be a function satisfying
\begin{equation}\label{propofu}
    \begin{cases}
    \Delta u (x) \ge 0 & \,  \textrm{for every } x \in \{u>0\}, \\
    u(x) \ge 0 & \,  \textrm{for every } x \in B_2, \\
    u(\textbf{0})=0.
    \end{cases}
\end{equation}
Notice that we suppose $u \in C^2(B_2)$, this is done to minimize technical difficulties. However, aside for a passage in this Step and one at the beginning Step 2, that will be appropriately highlighted, the proof can be performed, with the assumption $u \in C(B_2)$. Below, we will repeatedly use \eqref{eq:polarchange} without explicitly stating it. For every $0< s \le 1$, when integrals over subsets of $\partial B_s$ arise, for brevity we write $u$ instead of $u_\mathscr{P}$ and omit the dependence on polar coordinates.
Define the map $I: (0,1) \rightarrow \R$ as
$$I(s)=\int_{B_s} \frac{|\nabla u(x)|^2}{|x|^{n-2}} \,dx \quad $$
for every $s\in (0,1)$, when we want to emphasize the dependence on the function we write $I(\cdot,u)$ instead of $I(\cdot)$. It is possible to prove, through an approximation argument involving mollifiers, that 
$$I(s) \le \frac{\bar C}{s^n} \int _{B_{2s} \setminus B_s} u(x)^2 \, dx$$
for every $s\in (0,1)$, where $\bar C$ is a positive constant depending on $n$, see \cite[equation (12.16)]{cs}. This shows immediately that the value $J(s)$ is finite for every $s \in (0,1)$. Since $u \in C(B_2)$ by the \emph{Caccioppoli inequality} $u \in H^1(B_1)$. So the map $I$ is differentiable almost everywhere, passing to polar coordinates and exploiting \eqref{Gradient in polar coordinates} we have
\begin{equation*}
    I'(s)= s^{2-n} \int_{\partial B_s} \left[ u_r^2+\frac{1}{s^2}|\nabla_\phi u|^2 \right] \, d\sigma 
\end{equation*}
for almost everywhere $s \in (0,1)$. Differentiating \eqref{acffunctional} and evaluating it at one of these points $S \in (0,1)$ for $u^+$ and $u^-$ we obtain
\begin{equation*}
    J'(S)=I(S,u_+)I(S,u_-)S^{-5}\bigg(S \bigg( \frac{I'(S,u_+)}{I(S,u_+)}+\frac{I'(S,u_-)}{I(S,u_-)} \bigg) -4\bigg).
\end{equation*}
Consider the map $u_{\scalebox{0.5}{$\boxed{S}$}}:B_2 \rightarrow \R$ defined as $u_{\scalebox{0.5}{$\boxed{S}$}}(x):=\frac{u(Sx)}{S}$ for every $x \in B_2$, it satisfies \eqref{propofu} with $u=u_{\scalebox{0.5}{$\boxed{S}$}}$ and we can write
\begin{equation}\label{eq:ratioI,I'}
    \frac{I'(S,u)}{I(S,u)}= \frac{1}{S}\ddfrac{\int_{\partial B_1} \left[ ((u_{\scalebox{0.5}{$\boxed{S}$}})_r)^2+|\nabla_\phi (u_{\scalebox{0.5}{$\boxed{S}$}})|^2 \right] \, d\sigma}{\int_{B_1} \frac{|\nabla u_{\scalebox{0.5}{$\boxed{S}$}}(x)|^2}{|x|^{n-2}}  \, dx}.
\end{equation}
In particular to obtain the desired monotonicity it is sufficient to show that
\begin{equation}\label{Step0mono}
     \frac{I'(1,u_+)}{I(1,u_+)}+\frac{I'(1,u_-)}{I(1,u_-)}   -4 \ge 0
\end{equation}
for functions $u_+,u_-$ satisfying \eqref{ACFcond}.

\medskip
\emph{Step 2 (reduction to the Friedland-Hayman inequality).}
We claim that to obtain the monotonicity of the function $J$ it is sufficient to prove the Friedland-Hayman inequality.
Define the open set 
$$\Gamma=\{ u>0\} \cap \partial B_1.$$
Since \eqref{propofu} yields
$$\Delta (u(x)^2) \ge 2|\nabla u(x)|^2 \quad \text{for every } x \in \{u >0\},$$
by the divergence theorem we obtain
\begin{equation*}
\begin{split}
    I(1) \le \frac{1}{2} \int_{B_1} \frac{\Delta u(x)^2}{|x|^{n-2}} \, dx &= \int_{\Gamma}uu_r \, d\sigma + \frac{n-2}{2} \int_{B_1} \frac{\nabla u(x)^2 \cdot x}{|x|^n} \, dx.
\end{split}
\end{equation*}
Moreover, since $\Delta |x|^{2-n}=\bar c \, \delta_\textbf{0}$ in the sense of distribution, where $\bar c$ constant depending on $n$ and $\delta_\textbf{0}$ is the \emph{Dirac delta} evaluated at $\textbf{0}$, and $u(\textbf{0})=0$ we have
$$\int_{B_1} u(x)^2 \Delta \left( \frac{1}{|x|^{n-2}} \right) \, dx =0,$$
hence an integration by part yields
$$\int_{B_1} \frac{\nabla u(x)^2 \cdot x}{|x|^n} \, dx =\int_{\Gamma} u^2 \, d\sigma.$$
In particular we obtain
$$I(1) \le \int_{\Gamma} \left[ uu_r+\frac{n-2}{2} u^2 \right] \, d\sigma,$$
this relation can be retrieved also assuming $u \in C(B_2)$ with a little more work, exploiting an approximation argument involving mollifiers of $u$, see \cite[Lemma 5.1]{acf}.
Consequently by \eqref{eq:ratioI,I'} we have
\begin{equation}\label{quotient}
    \frac{I'(1)}{I(1)} \ge
    \ddfrac{\int_{\Gamma} \left[ u_r^2+|\nabla_\phi u|^2 \right] \, d\sigma}{\int_{\Gamma} \left[ uu_r+\frac{n-2}{2} u^2 \right] \, d\sigma}.
\end{equation}
  Let $t \in [0,1]$ and denote $\lambda=\lambda(\Gamma)$, by \eqref{spherepseudoeigen} and the \emph{Young's inequality} for products we have 
\begin{equation*}
   \int_{\Gamma} \left[ u_r^2+|\nabla_\phi u|^2 \right] \, d\sigma \ge 2 \bigg(\int_{\Gamma} u_r^2 \, d\sigma \bigg)^{\frac{1}{2}}\bigg(t \lambda \int_{\Gamma} u^2 \, d\sigma \bigg)^{\frac{1}{2}} + (1-t) \lambda \int_{\Gamma} u^2 \, d\sigma ,
\end{equation*}
on the other hand by \emph{H\"older's inequality} we obtain
\begin{equation*}
    \int_{\Gamma} uu_r \, d\sigma+\frac{n-2}{2}\int_{\Gamma} u^2 \, d\sigma \le \bigg(\int_{\Gamma} u_r^2 \, d\sigma \bigg)^{\frac{1}{2}}
    \bigg(\int_{\Gamma} u^2 \, d\sigma \bigg)^{\frac{1}{2}}
    +\frac{n-2}{2} \int_{\Gamma} u^2 \, d\sigma.  
\end{equation*}
Therefore setting
$$z=\ddfrac{\bigg(\int_{\Gamma} u^2 \, d\sigma \bigg)^{\frac{1}{2}} }{\bigg(\int_{\Gamma} u_r^2 \, d\sigma \bigg)^{\frac{1}{2}}},$$
it holds
\begin{equation*}
    \frac{I'(1)}{I(1)} \ge \ddfrac{2 (t \lambda)^{\frac{1}{2}}  +  \lambda(1-t) z }{1+\frac{n-2}{2} z},
\end{equation*}
moreover it is possible to estimate
\begin{equation*}
    \ddfrac{2 (t \lambda)^{\frac{1}{2}}  +  \lambda(1-t) z }{1+\frac{n-2}{2} z} \ge 
    2\min \left\{ (t \lambda)^{\frac{1}{2}},\frac{\lambda}{n-2}(1-t) \right\}.
\end{equation*}
At this point, we choose $t$ such that these two lower bounds are equal, namely
\begin{equation}\label{tlambdaCharConstant}
 t\lambda+(n-2)(t \lambda)^{\frac{1}{2}}-\lambda=0,
\end{equation}
this is equivalent to require
\begin{equation*}
    \sqrt{t}=\frac{\sqrt{4\lambda}}{(n-2)+\sqrt{(n-2)^2+4\lambda}}.
\end{equation*}
In particular there exists a unique such $t \in [0,1]$, therefore by \eqref{def.alpha} and \eqref{tlambdaCharConstant} we have $t \lambda=\alpha(\Gamma)^2$ and
\begin{equation}\label{q>alpha}
    \frac{I'(1)}{I(1)} \ge 2\alpha(\Gamma).
\end{equation}
Combining \eqref{Step0mono} and \eqref{q>alpha} we are reduced to show that 
\begin{equation}\label{fhquotient}
     \alpha(\Gamma_+) + \alpha(\Gamma_-) \ge 2,
\end{equation}
for any pair of disjoint open sets $\Gamma_+,\Gamma_- \subset \partial B_1$, i.e., the Friedland-Hayman inequality.

\medskip
\emph{Step 3 (a monotonicity property of characteristic constants).}
We claim that the characteristic constant of a spherical cap of fixed colatitude is monotonically decreasing with respect to its dimension.
Fix $\theta_0 \in (0,\pi)$, we denote $\Gamma_{n}(\theta_0)$ the spherical cap of colatitude $\theta_0$ with center $p$ in $\partial B_1 \subset\R^n$. Let $w: \R^n\rightarrow \R$ be the positive homogeneous function defined by 
$$w_\mathscr{P}(r,\phi)=r^{\alpha(\theta_0,n)}u(\phi),$$
for every $ (r,\phi)\in \R^+ \times \partial B_1 $, where $\alpha(\theta_0,n)$ is the characteristic constant of the set $\Gamma_{n}(\theta_0)$, i.e, $\alpha(\theta_0,n):=\alpha(\Gamma_{n}(\theta_0))$, and $u$ is a positive eigenfunction corresponding to $\lambda(\Gamma_{n}(\theta_0))$. By Proposition \ref{p.harmextens}, the function $w$ is harmonic in $\{w>0\}$, this set is the cone generated by $\Gamma_{n}(\theta_0)$ with vertex in the origin of $\R^n$, in particular $w$ satisfies \eqref{propofu}.
Recall that it is possible to embed the space $\R^n$ into $\R^{n+1}$ using the map
$$(x_1, \ldots, x_n) \rightarrow (x_1, \ldots, x_n,0),$$
in this way the we have $\partial B_1 \subset \mathbb{S}^n$, where $\mathbb{S}^n$ is the sphere of unit radius in $\R^{n+1}$ with center in the origin. We define the function $\tilde w: \R^{n+1} \rightarrow \R$ as
$$\tilde w(x_1, \ldots, x_n,x_{n+1})= w(x_1, \ldots, x_n),$$
it has the same homogeneity degree of $w$ and is harmonic in $\{\tilde w>0\}$. We notice that it holds
$$\{\tilde w>0\}=\{w>0\} \times \R,$$
and so this set is the cone generated by $(\{w>0\} \times \R) \cap \mathbb{S}^n$ with vertex in the origin of $\R^{n+1}$.

\begin{figure}[htp]
    \centering
    
    \begin{minipage}{0.45\textwidth}
        \centering
    \begin{tikzpicture}
\begin{axis}[
    view={120}{30},
    axis lines=none,
    hide axis,
    ticks=none,
    enlargelimits=false,
    clip=false,
]

\addplot3[
    domain=0:360,
    samples=30,
    thin,
    color=gray!50,
]
({cos(x)},{sin(x)},{0});

\addplot3[
    domain=0:300,
    samples=30,
    thick,
    color=blue,
    line join=round,
    mesh/ordering=y varies,
]
({cos(x)},{sin(x)},{0});

\addplot3[
    domain=300:360,
    samples=30,
    thick,
    color=orange,
    line join=round,
    mesh/ordering=y varies,
]
({cos(x)},{sin(x)},{0});

\draw[dashed, orange!70!black] (axis cs:0,0,0) -- (axis cs:{cos(300)},{sin(300)},0);
\draw[dashed, orange!70!black] (axis cs:0,0,0) -- (axis cs:{cos(360)},{sin(360)},0);

\end{axis}
\end{tikzpicture}   
    \end{minipage}
    \hfill
    \begin{minipage}{0.45\textwidth}
        \centering
    \begin{tikzpicture}

\begin{axis}[
    view={120}{30},
    hide axis,
    axis lines=none,
    colormap/blackwhite,
    enlargelimits=false,
    clip=false,
    z buffer=sort,
    samples=30,
    samples y=35,
    domain y=0:180,
]

\addplot3[
    surf,
    opacity=0.4,
    color=blue,
    domain=0:300, 
]
(
    {sin(y)*cos(x)},
    {sin(y)*sin(x)},
    {cos(y)}
);

\addplot3[
    surf,
    opacity=0.9,
    color=orange,
    domain=300:360, 
]
(
    {sin(y)*cos(x)},
    {sin(y)*sin(x)},
    {cos(y)}
);

\end{axis}
\end{tikzpicture}
    \end{minipage}

    \caption{The set $\partial B_1 \cap \{w>0\}$ in the case $n=2$ and $\theta_0 \approx \pi/3$ (in orange, on the left). 
    The set $\mathbb{S}^n \cap \{\tilde w>0\}$ in the case $n=2$ and $\theta_0 \approx\pi/3$ (in orange, on the right).}
    \label{fig:placeholder}
\end{figure}

Moreover we can write
$$\tilde w_\mathscr{\tilde S}(\tilde r,\tilde \phi)=\tilde{r}^{\alpha(\theta_0,n)}\tilde u(\tilde\phi),$$
for every $ (\tilde r,\tilde \phi)\in \R^+ \times \mathbb{S}^n $, where $\mathscr{\tilde S}$ is the polar parametrization of $\R^{n+1}$ (and $(\tilde r,\tilde \phi)$ the corresponding polar coordinates) and $\tilde u: \mathbb{S}^n \rightarrow \R$ is a function. We observe that $\tilde w$ satisfies the analogous of  \eqref{propofu} in $\R^{n+1}$, so by Step 2, see \eqref{q>alpha}, we obtain
$$\frac{I'(1,\tilde w)}{I(1,\tilde w)} \ge 2 \alpha(\theta_0,n+1),$$
on the other hand
by \eqref{eq:ratioI,I'} and Lemma \ref{l.inthom} we have
$$\frac{I'(1,\tilde w)}{I(1,\tilde w)}=\frac{I'(1,w)}{I(1,w)}=2\alpha(\theta_0,n),$$
which yields
\begin{equation}\label{monoalpha}
    \alpha(\theta_0,n) \ge \alpha(\theta_0,n+1).
\end{equation}

\medskip
\emph{Step 4 (the Friedland-Hayman inequality).}
We claim that the Friedland-Hayman inequality holds. Consider \eqref{fhquotient}, by Proposition \ref{p.symmetrized} it is sufficient to prove that it holds for all pairs $(\Gamma_+,\Gamma_-)$ of disjoint spherical caps in $\partial B_1$. From the strict monotonicity statement of Proposition \ref{propconvex}, (an isometry) and \eqref{def.alpha} we can take these sets to be complementary in $\partial B_1$, namely this is equivalent to prove that
\begin{equation*}
    \min_{\theta_0 \in (0,\pi)} \alpha(\theta_0,n)+\alpha(\pi-\theta_0,n) \ge 2.
\end{equation*}
By \eqref{lambda(0)} and the differentiability statement of Proposition \ref{propconvex} there exists a value $\theta_n \in (0,\pi)$ such that the minimum is achieved. Define the function $\beta : \{ m \in \N \, : \, m \ge 3 \} \rightarrow \R$ as
$$\beta(n)=\alpha(\theta_n,n)+\alpha(\pi-\theta_n,n),$$
for $n \in \{ m \in \N \, : \, m \ge 3 \}$, by \eqref{monoalpha} it is monotone non-increasing.

Suppose by way of contradiction that exists an $n_0 \in \N$ and $\delta >0$ such that $\beta(n_0)<2-\delta$, then $\beta(n)<2-\delta$ for all $n \ge  n_0$ (this value will be increased in the following in order to satisfy more conditions and ease the notation). Therefore by minimality of $\theta_n$ we have
$$\alpha(\theta_n,n),\alpha(\pi-\theta_n,n)<2,$$
that by \eqref{def.alpha} gives
\begin{equation}\label{est.lambda_n}
    \lambda(\theta_n),\lambda(\pi-\theta_n)<2n.
\end{equation}
We study the behavior of the sequence
$$\left\{\beta(n)=\bigg( \dfrac{n-2}{2} \bigg)\left(\sqrt{1 + \frac{4\lambda(\theta_n)}{(n-2)^2}}- 1 + \sqrt{1 + \frac{4\lambda(\pi-\theta_n)}{(n-2)^2}}- 1\right) \right\}_n$$
as $n \rightarrow +\infty$, we can estimate
\begin{equation*}
    \beta(n) \ge \frac{\lambda(\theta_n)+\lambda(\pi-\theta_n)}{n-2}-\frac{\lambda(\theta_n)^2+\lambda(\pi-\theta_n)^2}{(n-2)^3}
\end{equation*}
for all $n \ge n_0$, up to taking a bigger $n_0$, since the third term in the Taylor formula -- the first one not appearing here -- is positive. By \eqref{est.lambda_n} we can estimate
\begin{equation*}
    -\frac{\lambda(\theta_n)^2+\lambda(\pi-\theta_n)^2}{(n-2)^3} \ge -\frac{c}{n},
\end{equation*}
where $c$ is a positive constant (not depending on $n$). Consider the function $\gamma_n: (0,\pi) \rightarrow \R$ defined as 
$$\gamma_n(\theta):=\frac{\lambda(\theta)+\lambda(\pi-\theta)}{n-2},$$
for every $\theta \in (0,\pi)$. By Proposition \ref{propconvex}, for $n \ge 5$, $\gamma_n$ is a strictly convex function, evenly symmetric with respect to the point $\pi/2$, that is its unique minimum. So by \eqref{eigenhalfcap} we obtain
\begin{equation*}
    \gamma_n(\theta_n) \ge \gamma_n\left(\frac{\pi}{2}\right)= \frac{2n-2}{n-2} > 2
\end{equation*}
for all $n\ge 5$, therefore by combining the estimates above we have
$$\beta(n)>2 - \frac{c}{n} \ge 2-\delta,$$
for all $n \ge n_0$, up to taking a bigger $n_0$, a contradiction. This proves the Friedland-Hayman inequality and therefore, by Step 2, the desired monotonicity of $J$. \hfill \qed

\section{An open problem}\label{s.5}
We present an open question taken from \cite[Introduction]{ak}.

We notice that Theorems \ref{t.rigidity} and \ref{t.stability} describe the structure of the functions $u_+,u_-$ satisfying \eqref{ACFcond} when the map $J$ is constant and when the map $J$ is near to a constant, respectively. In particular a family of functions, the one constituted by the so-called \emph{two-planes solutions}, is central in the description of the behavior of the pair $u_+,u_-$. This family is the set
$$
\left\{
\begin{array}{ll}
\begin{aligned}
\ell : \mathbb{R}^n &\rightarrow \mathbb{R} \\
x &\mapsto  c_+(x \cdot \nu)^+ + c_-(x \cdot \nu)^- 
\end{aligned}
&
\Bigg|\quad \raisebox{0\height}{$c_+,c_->0 \textup{ and } \nu \in \partial B_1$}
\end{array}
\right\}.
$$
A two-planes solution is a function made up of a pair of positive linear functions defined on two disjoint complementary half-spaces, vanishing on the common boundary that contains the origin, they satisfy \eqref{ACFcond}.
\begin{figure}[htp]
        \centering

\tdplotsetmaincoords{65}{100}

\begin{tikzpicture}[tdplot_main_coords, scale=2.5]

\draw[->] (-1.5,0,0) -- (1.5,0,0) node[below right] {$x_1$};  
\draw[->] (0,-1.2,0) -- (0,1.2,0) node[below right] {$x_2$};

\def\xmax{1.2}
\def\ymax{1.0}
\def\cp{0.8}  
\def\cm{-0.4}  

\filldraw[fill=blue!40, opacity=0.7]
  (-\xmax,0,0)
  -- (\xmax,0,0)
  -- (\xmax,\ymax,\cp*\ymax)
  -- (-\xmax,\ymax,\cp*\ymax)
  -- cycle;

\filldraw[fill=red!40, opacity=0.7]
  (-\xmax,0,0)
  -- (\xmax,0,0)
  -- (\xmax,-\ymax,-\cm*\ymax)
  -- (-\xmax,-\ymax,-\cm*\ymax)
  -- cycle;

\fill (0,0,0) circle (0.5pt);
\node[below right] at (0,-0.05,0) {$\mathbf{0}$}; 


\draw[dashed, thick, gray!60] (-\xmax,0,0) -- (\xmax,0,0);

\end{tikzpicture}
        \caption{An example of two-planes solution in $\R^2$.}
        \label{Two plane solution in the two dimensional case}
    \end{figure}

Let $u_+,u_-$ be two functions satisfying \eqref{ACFcond} and 
\begin{equation}\label{posACFlim}
    \lim_{s \rightarrow 0^+} J(s)>0.
\end{equation}
Consider $\{s_k\}_k \subset (0,1)$ a \emph{sequence of radii} decreasing to $0$, and the associated \emph{blow-up sequences}
$$\left\{\frac{u_+(s_k \,\cdot)}{s_k}\right\}_k \quad \text{and} \quad \left\{\frac{u_-(s_k \,\cdot)}{s_k}\right\}_k,$$
restricted to $B_1$. As in \cite[Section 6]{acf}, up to subsequence, it is possible to show that they converge, in an appropriate way, to the the functions 
$$x \mapsto c_+(x \cdot \nu)^+ \quad \text{and} \quad x \mapsto c_-(x \cdot \nu)^-,$$
where $c_+,c_->0$ and $\nu \in \partial B_1$, defined for $x \in B_1$. This pair is called \emph{blow-up limit}. Observe that the values $c_+,c_-,\nu$, depend \emph{a priori} on the sequence $\{s_k\}_k$. The result of the blow-up procedure described above is a pair of functions representing one of the possible behaviors that the original pair exhibits when approaching the origin. In the theory of free boundary problems, classifying the possible blow-up limits is useful to obtain information about the regularity of the free boundary, see for example \cite[proof of Theorem 1.1]{dpsv} for a related result.

It has been shown in \cite[Theorem 1.3]{ak} that it is possible to construct two functions $\tilde{u}_+$ and $\tilde{u}_-$ satisfying \eqref{ACFcond}, that admit multiple different blow-up limits, depending on the chosen sequence of radii. In particular the positivity sets of these functions wrap around the origin. More precisely for all of the possible blow-up limits, $\tilde{c}_+$ and $\tilde{c}_-$ are the same, while $\tilde{\nu}$ can be different, depending on the particular subsequence of radii.

\medskip
\begin{op}[\cite{ak}]
    Is it possible to find a pair $u_+,u_-$ satisfying \eqref{ACFcond} and \eqref{posACFlim} such that for all of the blow-up limits, $\nu$ is the same, while the pair $(c_+,c_-)$ can be different, depending on the particular subsequence of radii?
\end{op}

\medskip

\noindent{\textbf{Use of Generative-AI tools declaration.}}
The author declares the use of Ar
tificial Intelligence (AI) tools in the creation of this article. The author has used
ChatGPT (Free Plan, model GPT-4o) to generate parts of the TikZ code that produces the figures in the article.

\medskip

\noindent{\textbf{Acknowledgements.}}
The author would like to express his sincere gratitude to an anonymous reviewer who carefully read this article and suggested several important improvements (including the reference \cite{v}). The author wishes to thank Professor Mark Allen for providing him with feedback on the open problem of \cite{ak}. The author wishes to thank Professor Benedetta Noris for providing him with feedback on \cite{n}. The author wishes to thank Professor Susanna Terracini for proposing this topic as master thesis subject. The author wishes to thank Professor Giorgio Tortone for the useful conversations about this topic. This publication is part of the project PNRR-NGEU which has received funding from the MUR – DM 351/2022. The author is a member of the Research Group INdAM–GNAMPA. The author has been partially supported by the GNAMPA 2024 project \emph{Free boundary problems in non-commutative structures and degenerate operators}.

\end{document}